\documentclass[review]{elsarticle}

\usepackage{amsthm}
\usepackage{amsmath}
\usepackage{amssymb}
\usepackage{txfonts}
\usepackage{tabls}
\usepackage{indentfirst}
\usepackage{pgf,tikz}
\usepackage{array}
\usepackage{enumerate}
\usepackage{cases}
 \usepackage[arrow,matrix]{xy}
 \usepackage{color,xcolor}
 \newcolumntype{C}[1]{>{\centering\arraybackslash}p{#1}}

\newtheorem{thm}{Theorem}[section]
 
 \newtheorem{lem}[thm]{Lemma}
 \newtheorem{prop}[thm]{Proposition}
 \newtheorem{ex}[thm]{Example}

 \theoremstyle{definition}
  \newtheorem{defn}[thm]{Definition}
 \theoremstyle{remark}
 \newtheorem{rem}[thm]{Remark}

\journal{Journal of Functional Analysis}









\bibliographystyle{elsarticle-num}

\begin{document}

\begin{frontmatter}

\title{Large scale properties for bounded automata groups\tnoteref{mytitlenote}}
\tnotetext[mytitlenote]{The authors were partially supported by the key discipline innovative talent training plan of Fudan University(EZH1411370/003/005)}

\author{Xiaoman Chen}
\ead{xchen@fudan.edu.cn}

\author{Jiawen Zhang\corref{mycorrespondingauthor}}
\cortext[mycorrespondingauthor]{Corresponding author}
\ead{jiawenzhang12@fudan.edu.cn}

\address{School of Mathematical Sciences, Fudan University, 220 Handan Road, Shanghai, 200433, China}

\begin{abstract}
In this paper, we study some large scale properties of the mother groups of bounded automata groups.
First we give two methods to prove every mother group has infinite asymptotic dimension. Then we study the decomposition complexity of certain subgroup in the mother group. We prove the subgroup belongs to $\mathcal{D}_\omega$.
\end{abstract}

\begin{keyword}
Automata group (self similar group); bounded automata group; asymptotic dimension; finite decomposition complexity
\end{keyword}

\end{frontmatter}


\section{Introduction}

Self similar groups (groups generated by automata) were introduced by V. M. Glu\v{s}hkov \cite{Glu61} in the 1960s, and are now very important in different aspects of mathematics. They are generated by simple automata, but their structures are very complicated and they possess a lot of interesting properties which are hard to find in classical ways. These properties help to answer some famous problems in the early times. For example, the Grigorchuk group \cite{Gri06} can be defined by an automaton with five states over two letters. It is the first example of a group with intermediate growth \cite{Gri85}, which answered the Milnor problem, and it is also a finitely generated infinite torsion group \cite{Gri80}, which answered one of the Burnside problems.

The class of bounded automata groups is a special kind of self similar groups with relatively simple structures, which has been first defined and studied by S. Sidki (\cite{Sid00}, \cite{Sid04}). This class is very large and it contains most of the well-studied groups, like the Grigorchuk group, the Gupta-Sidki group \cite{GS83}, the Basilica group and so on. S. Sidki proved the structure theorem of bounded automata groups in \cite{Sid00}, which describes how elements in them look like. Recently an embedding theorem has been proven \cite{BKN10} which said that there exists a series of mother groups such that every finitely generated bounded automata group can be embedded into one of them. And it has also been proven that mother groups are amenable, so is any bounded group.

In this paper, we study two large scale properties of the mother groups: asymptotic dimension and finite decomposition complexity. Asymptotic dimension was firstly introduced by Gromov in 1993 as a coarse analogue of the classical topological covering dimension, but it didn't get much attention until G. Yu in 1998 proved that the Novikov higher signature conjecture holds for groups with finite asymptotic dimensions \cite{Yu98}. So it is important to study whether the mother groups have finite asymptotic dimensions or not. In \cite{Smi07} J. Smith has proved that the Grigorchuk group has infinite asymptotic dimension, then by the embedding theorem, most of the mother groups have infinite asymptotic dimensions, except several ones with fewer letters. We prove:

\noindent{\bf Main Theorem 1.}
All of the mother groups $G_d$ of bounded automata groups have infinite asymptotic dimensions for $d>2$.

We prove this theorem by two different methods. One is to show the mother group $G_3$ is coarsely equivalent to the cubic power of itself. Another is more precise: we show that the direct sum of countable infinitely many copies of integer can be embedded into all of the mother groups $G_d$ for $d>2$.

Next, we study the decomposition complexity of the mother group $G_3$. Finite decomposition complexity (FDC) is a concept introduced by E. Guentner, R. Tessera and G. Yu \cite{GTY12} in order to solve certain strong rigidity problem including the stable Borel conjecture. It generalizes finite asymptotic dimension. Briefly speaking, a metric space has FDC if it admits an algorithm to decompose itself into some nice pieces which are easy to handle in certain asymptotic way. We focus on the decomposition complexity of a special subgroup in the mother group $G_3$. It was derived naturally from the proof of the first main theorem. We study the commutative relations between the generators, then use induction to prove this subgroup belongs to $\mathcal{D}_\omega$ where $\omega$ is the first infinite ordinal number. In particular, this subgroup has FDC. The notion $\mathcal{D}_\omega$ will be introduced in the next section.

\noindent{\bf Main Theorem 2.}
The mother group $G_3$ contains a subgroup $T$ which belongs to $\mathcal{D}_\omega$ but has infinite asymptotic dimension.

The paper is organized as follows. In Section 2, we recall some basic definitions and properties of automata group, asymptotic dimension, and finite decomposition complexity. In Section 3, we recall the mother groups and the embedding theorem for bounded automata groups. Then we prove our first main theorem. In the last section, we focus on the special subgroup in $G_3$. We prove it has finite decomposition complexity. More explicitly, it belongs to $\mathcal{D}_\omega$.

\noindent{\bf Acknowledgment.}
We thank Guoliang Yu, Yijun Yao and Andrzej Zuk for many stimulating discussions.

\section{Preliminaries}
In this section, we introduce the basic concepts of automata groups. See \cite{Nek05} for classical references on automata groups.

\subsection{Rooted tree $\mathbf{X^*}$ and its automorphism group $\mathbf{Aut(X^*)}$}
We first recall some basic notions of rooted trees and their automorphism groups. See Chapter One of \cite{Nek05} for reference.

Let $\mathrm{X}$ be a finite set with cardinality $d$, which we call \emph{alphabet}. Define $\mathrm{X^*}$ to be the set of all finite words over the alphabet $\mathrm{X}$, i.e. $\mathrm{X^*}=\{x_1x_2\cdots x_n:x_i \in \mathrm{X}, n=0,1,2,\cdots\}$. There is a natural corresponding between $\mathrm{X^*}$ and the vertices set of a rooted $d-$regular tree $T_d$ in which two words are connected by an edge if and only if they are of the form $w$ and $wx$, where $w \in \mathrm{X^*}$, $x \in \mathrm{X}$. The empty word $\emptyset$ is the root of the tree. For any finite word $v$ in $\mathrm{X^*}$, we use $|v|$ to denote the level of $v$. The set $\mathrm{X}^n$ is the $nth$ \emph{level} of $\mathrm{X^*}$.

A map $f:\mathrm{X^*}\rightarrow \mathrm{X^*}$ is called an \emph{endomorphism} of the tree $\mathrm{X^*}$ if it preserves the root and adjacency of the vertices. An \emph{automorphism} is a bijective endomorphism. Denote by $\mathrm{Aut(X^*)}$ the automorphism group of the rooted tree $\mathrm{X^*}$.
We recall the definition of the wreath product here for further explanation of $\mathrm{Aut(X^*)}$.

\begin{defn}\label{wreath product}
Let $G$ be a group, and $d$ be a positive integer. Denote by $S_d$ the permutation group of $d$ elements. There is a natural action of $S_d$ on $G^d$ defined by $\sigma \cdot (g_1,g_2,\cdots,g_d)=(g_{\sigma(1)},g_{\sigma(2)},\cdots,g_{\sigma(d)})$, where $\sigma \in S_d$, $(g_1,g_2,\cdots,g_d)\in G^d$. Define the wreath product $G\wr d$ to be the semi-product $G^d\rtimes S_d$. More Explicitly, the multiplication in $G\wr d$ is given by
$$\big((g_1,g_2,\cdots,g_d),\sigma\big)\cdot \big((h_1,h_2,\cdots,h_d),\tau\big)=\big((g_1h_{\sigma(1)},g_2h_{\sigma(2)},\cdots,g_dh_{\sigma(d)}),\sigma\tau\big).$$
\end{defn}

Let $g \in \mathrm{Aut(X^*)}$, and fix a vertex $v \in \mathrm{X^*}$. The subtree $v\mathrm{X^*}$ is the rooted tree with the root $v$ and all the words in $\mathrm{X^*}$ starting with $v$. Then $g$ naturally induces a map $v\mathrm{X^*}\rightarrow g(v)\mathrm{X^*}$. We can identify the tree $\mathrm{X^*}$ with the subtree $v\mathrm{X^*}$ by sending $w$ to $vw$, also $\mathrm{X^*}$ with $g(v)\mathrm{X^*}$. Under these identifications, $g$ induces an automorphism $g|_v \in \mathrm{Aut(X^*)}$ which we call the \emph{restriction} of $g$ on $v$.

Now we can resolve an automorphism of a rooted regular tree into several automorphisms of its subtrees as follows.

\begin{prop}\label{decomposition of automorphisms}
Let $\mathrm{X}=\{1,2,\cdots,d\}$, then there is an isomorphism
$$\psi:\mathrm{Aut(X^*)}  \rightarrow  \mathrm{Aut(X^*)}\wr d ,$$
given by
$$g \mapsto (g|_1,g|_2,\cdots,g|_d)\sigma,$$
where $\sigma$ is the action of $g$ on $\mathrm{X} \subset \mathrm{X^*}$.
\end{prop}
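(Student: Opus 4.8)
The plan is to verify directly that the map $\psi$ is a well-defined group isomorphism, treating the three requirements (well-definedness, multiplicativity, and bijectivity) in turn. For well-definedness, I would first observe that any $g \in \mathrm{Aut(X^*)}$ permutes the first level $\mathrm{X} \subset \mathrm{X^*}$, since it fixes the root and preserves adjacency; call this permutation $\sigma \in S_d$. The restrictions $g|_1, \ldots, g|_d$ are automorphisms of $\mathrm{X^*}$ by the construction of restriction recalled just before the statement, so $(g|_1, \ldots, g|_d)\sigma$ is a genuine element of $\mathrm{Aut(X^*)} \wr d$. The content here is simply that $g$ is completely recorded by how it moves the $d$ subtrees hanging off the root together with how it acts inside each of those subtrees.

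The heart of the proof is showing that $\psi$ is a homomorphism, and this is the step I expect to be the main obstacle, because it forces one to pin down the composition and restriction conventions so that they match the wreath product multiplication of Definition \ref{wreath product}. The key identity is the chain rule for restrictions: for $g, h \in \mathrm{Aut(X^*)}$ and a first-level letter $x \in \mathrm{X}$, one has $(gh)|_x = g|_x \cdot h|_{\sigma(x)}$, where $\sigma$ is the first-level permutation of $g$, together with the fact that the first-level permutation of $gh$ is $\sigma\tau$. I would prove this identity by evaluating both sides on an arbitrary word $xw$ and applying the defining property $g(xw) = g(x)\,g|_x(w)$ twice. Substituting the resulting expressions into $\psi(g)\psi(h)$ and comparing coordinate by coordinate with the formula $\big((g_1,\ldots,g_d),\sigma\big)\big((h_1,\ldots,h_d),\tau\big) = \big((g_1 h_{\sigma(1)},\ldots,g_d h_{\sigma(d)}),\sigma\tau\big)$ then yields $\psi(gh) = \psi(g)\psi(h)$ exactly. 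Care must be taken with the direction of composition: the convention for $gh$ has to be chosen so that the shift by $\sigma$ in the restriction index agrees with the shift by $\sigma$ appearing in the wreath product multiplication, and getting this bookkeeping right is precisely the delicate point.

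Finally, for bijectivity I would exhibit an explicit inverse rather than argue abstractly. Given an element $\big((g_1,\ldots,g_d),\sigma\big) \in \mathrm{Aut(X^*)}\wr d$, define a map $g$ on $\mathrm{X^*}$ by $g(\emptyset) = \emptyset$ and $g(xw) = \sigma(x)\,g_x(w)$ for $x \in \mathrm{X}$, $w \in \mathrm{X^*}$. I would check that $g$ is a tree automorphism: it is a bijection, fixes the root, and preserves adjacency, since each $g_x$ does and $\sigma$ is a bijection of the first level. Reading off the first-level permutation and the restrictions of this $g$ returns $\sigma$ and the $g_x$ immediately from the defining rule, so $\psi(g)$ recovers the prescribed data; conversely the same formula reconstructs $g$ from $\psi(g)$. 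Hence this construction is a two-sided inverse to $\psi$, and combined with the homomorphism property it follows that $\psi$ is an isomorphism.
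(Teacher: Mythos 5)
Your proposal is correct, but there is nothing in the paper to measure it against: the paper states Proposition \ref{decomposition of automorphisms} without proof, treating it as a standard fact about automorphisms of rooted trees recalled from the literature on self-similar groups. Your direct verification (well-definedness, the chain rule for restrictions, explicit inverse) is precisely the standard argument that the cited reference contains, so the only substantive remark concerns the one point you flag but leave open, and it can be settled as follows. The identity $(gh)|_x = g|_x\cdot h|_{\sigma(x)}$, with $\sigma$ the first-level permutation of $g$, holds exactly when products are composed left-to-right, i.e. $(gh)(v)=h\big(g(v)\big)$: then
$$(gh)(xw)=h\big(g(x)\,g|_x(w)\big)=h\big(g(x)\big)\cdot h|_{g(x)}\big(g|_x(w)\big),$$
so the first-level permutation of $gh$ is $\sigma\tau$ and $(gh)|_x=g|_x\cdot h|_{\sigma(x)}$, matching the multiplication rule of Definition \ref{wreath product} coordinate by coordinate. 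If instead one composes right-to-left, $(gh)(v)=g\big(h(v)\big)$, the same computation yields $(gh)|_x=g|_{\tau(x)}\cdot h|_x$, which matches the opposite twisting of the semidirect product, not the rule written in Definition \ref{wreath product}; with that convention the map $\psi$ as written would fail to be multiplicative. So the convention forced on you by the paper's wreath product is the left-to-right one (note that the formula $\sigma\cdot(g_1,\ldots,g_d)=(g_{\sigma(1)},\ldots,g_{\sigma(d)})$ in Definition \ref{wreath product} is itself a left action only under that same convention), and once this is fixed your three steps go through verbatim.
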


In the following, we will use $g=(g|_1,g|_2,\cdots,g|_d)\sigma$ to represent the above map.
We also introduce a graph to represent the above proposition as follows.
Draw the $0$th level and the $1$st level of the $d-$regular tree $T_d$. For a given element $g \in \mathrm{Aut(X^*)}$, suppose $\psi(g)=(g|_1,g|_2,\cdots,g|_d)\sigma$ where $\psi$ is defined in Proposition \ref{decomposition of automorphisms}. Label the root by $\sigma$, and the first level by $g|_1,g|_2,\cdots,g|_d$ in order from left to right. We call it the \emph{graph representation} of $g$.
We draw the case $d=3$ as an example.

$$
g=
\begin{array}{c}
          \begin{tikzpicture}
            \tikzstyle{l} = [];
            \tikzstyle{r} = [draw,circle]
                \node[r] {$\sigma$} [grow'=down]
                child {
                    node[l] (t1) {$g|_3$}
                }
                child {
                    node[l] (t2) {$g|_2$}
                }
                child {
                    node[l] (t3) {$g|_1$}
                };
           \end{tikzpicture}
\end{array}.
$$

\subsection{Automata}
We will introduce another point of view on the automorphism group of a rooted regular tree.
Let $\mathrm{X}$ be as above.

\begin{defn}\label{automata}
An \emph{automaton} $\mathrm{A}$ over the alphabet $\mathrm{X}$ is given by two things,
\begin{itemize}
  \item the set of $\mathrm{state}$s, also denoted by $\mathrm{A}$;
  \item a map $\tau: \mathrm{X} \times \mathrm{A}  \rightarrow  \mathrm{X} \times \mathrm{A}$.
\end{itemize}
If $\tau(x,q)=(y,p)$, then $y$ and $p$ as functions of $(x,q)$ are called the \emph{output} and the \emph{transition function}, respectively. We denote them by $y=\mathrm{A}_\Box(x,q)$, and $p=\mathrm{A}_\bullet(x,q)$. $\mathrm{A}$ is called \emph{invertible} if $\tau(\cdot, q)$ is a bijection $\mathrm{X} \rightarrow \mathrm{X}$ for any state $q$.
\end{defn}

We interpret an invertible automaton as a machine which produces automorphisms of $\mathrm{X^*}$ as follows.
Fix a state $q$, if we input a letter $x \in \mathrm{X}$, then we have the output $y=\mathrm{A}_\Box(x,q)$ and a new state $p=\mathrm{A}_\bullet(x,q)$. Next we input a letter $z$, then we can get another output $w=\mathrm{A}_\Box(z,p)$ and another state $s=\mathrm{A}_\bullet(z,p)$. Inductively, we can define an automaton $\mathrm{A^*}$ with alphabet $\mathrm{X^*}$ and the same state space as $\mathrm{A}$ by
\begin{eqnarray*}
  \mathrm{A}_{\Box}^*(x_1x_2\cdots x_n,q)&=&\mathrm{A}_\Box(x_1,q) \mathrm{A}_\Box^* (x_2\cdots x_n, \mathrm{A}_\bullet(x_1,q)),\\
  \mathrm{A}_\bullet^*(x_1x_2\cdots x_n,q)&=&\mathrm{A}_\bullet^*(x_2\cdots x_n, \mathrm{A}_\bullet(x_1,q)).
\end{eqnarray*}
In this way the automaton with an initial state $q$ can be associated with an endomorphism $g$.
Because the automaton is invertible, $g$ is an automorphism.

\subsection{Self similar group}
We recall the definition of self similar groups.

\begin{defn}\label{self similar group}
Let $\mathrm{X}$ be a finite set with $d$ elements, and $G$ be a subgroup in $\mathrm{Aut(X^*)}$. $G$ is called \emph{self similar} if for any $v \in \mathrm{X^*}$, one has $g|_v \in G$.
\end{defn}

Recall that we have defined a group isomorphism $\psi$ in Proposition \ref{decomposition of automorphisms}. Then the above definition is equivalent to say that there exists a group homomorphism $\varphi:G\rightarrow G\wr d$, defined by the restriction of $\psi$ on $G$. The map $\varphi$ is called the \emph{wreath recursion} of $G$, and also called the \emph{self similar structure} of $G$.

Generally, suppose $G$ is any group, not necessarily a subgroup in $\mathrm{Aut(X^*)}$. Given a group homomorphism $\varphi:G\rightarrow G\wr d$, then there exists a group homomorphism $\rho: G\rightarrow \mathrm{Aut(X^*)}$. In other words, $G$ acts on the $d-$regular tree $\mathrm{X^*}$, and the image $\mathrm{Im}(\rho)$ is self similar in the above sense. In this situation, we also call $G$ a \emph{self similar group}.

Now we interpret self similar group in terms of automata. Given an automaton $\mathrm{A}$ and fix a state $q$, associate an automorphism $g$ as explained in the above subsection. For convenience, we denote such $g$ by $\mathrm{A}_q$. Let $G$ be the subgroup of $\mathrm{Aut(X^*)}$ generated by $\{\mathrm{A}_q: q\,\mbox{is a state}\}$. It's easy to see that $G$ is a self similar group in the above definition, and $G$ is called the \emph{automata group generated by $\mathrm{A}$}. Conversely, given any self similar group $G$, it's easy to construct an automaton $\mathrm{A}$ such that the associated group is just $G$. From now on, we will abuse the words "self similar group" and "automata group".

\begin{ex}(See \cite{Gri06}.)\label{Grigorchuk group}
We give a famous example of the self similar group, the Grigorchuk group, which answered a lot of problems explained in the first section.
Let $T=T_2$ be a rooted binary tree, and the Grigorchuk group $\mathbb{G}$ is a subgroup of the automorphism group $\mathrm{Aut}(T)$. $\mathbb{G}$ is generated by four elements defined recursively as follows:
$$a=(1,1)\sigma,\quad b=(a,c),\quad c=(a,d),\quad d=(1,b),$$
where $\sigma=(12) \in S_2$. Here the equal sign is in the sense of Proposition \ref{decomposition of automorphisms}.

This group is infinite, of intermediate growth, and every element has finite order.
\end{ex}

\subsection{Bounded automata groups}
We introduce the main object of this paper, the bounded automata group which was first defined and studied by S. Sidki \cite{Sid00}. We also recommend \cite{BKN10} for reference.

Let $\mathrm{X}$ be as above, and $G$ be a self similar subgroup in $\mathrm{Aut(X^*)}$ generated by an automaton $\mathrm{A}$. Given an automorphism $\alpha \in G$, define the \emph{set of states} of $\alpha$ to be
$$S(\alpha)=\{\alpha|_w: w \in \mathrm{X^*}\}.$$
If $S(\alpha)$ is finite, then $\alpha$ is called \emph{automatic}. The set of all automatic automorphisms forms a subgroup $\mathfrak{A}(\mathrm{X})$ in $\mathrm{Aut(X^*)}$.

An automorphism $\alpha$ is called \emph{bounded} if the sets $\{w \in \mathrm{X}^n: \alpha|_w\neq 1\}$ have uniformly bounded cardinalities over all $n$. The set of all bounded automorphisms forms a subgroup $\mathfrak{B}(\mathrm{X})$ in $\mathrm{Aut(X^*)}$. Denote by $\mathfrak{BA}(X)=\mathfrak{B}(\mathrm{X})\cap \mathfrak{A}(\mathrm{X})$ the group of all bounded automatic automorphisms of the regular tree $\mathrm{X^*}$. A group $G$ is called a \emph{bounded automata group} if it is a subgroup of $\mathfrak{BA}(X)$ for some $X$.

Sidki has studied the description of bounded automorphisms. To state his result, we need some more notions.

An automorphism $\alpha$ is called \emph{finitary} if there exists a non-negative integer $l$ such that for any $w\in \mathrm{X}^l$, $\alpha|_w=1$. The smallest number $l$ with such property is called the \emph{finitary depth} of $\alpha$.
An automorphism $\alpha$ is called \emph{directed} if there exists a word $w_0\in \mathrm{X}^l$ such that $\alpha|_{w_0}=\alpha$, and all the other states $\alpha|_w$ for $w \in \mathrm{X}^l$ are finitary. The smallest number $l$ with such property is called the \emph{period} of $\alpha$.

Sidki got the following theorem describing the bounded automorphisms.

\begin{thm}\label{struction of bounded automata}(See \cite{Sid00}.)
An automatic automorphism $\alpha$ is bounded if and only if it is either finitary, or there exists an integer $m$ such that all non-finitary states $\alpha|_w$ with $w \in \mathrm{X}^m$ are directed.
\end{thm}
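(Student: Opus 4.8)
The plan is to exploit that $\alpha$ is automatic, so the set of states $S(\alpha)$ is finite, and to translate boundedness into a purely combinatorial statement about walks in a finite directed graph. For each state $p \in S(\alpha)$ set $a_n(p)=|\{w\in \mathrm{X}^n : p|_w\neq 1\}|$, so that boundedness of $\alpha$ means $\sup_n a_n(\alpha)<\infty$. Call a state \emph{non-finitary} if it is not finitary, and form the finite directed graph $\Gamma$ whose vertices are the non-finitary states in $S(\alpha)$, with an edge $p\to p|_x$ whenever $x\in\mathrm{X}$ and $p|_x$ is again non-finitary. Since restrictions of a finitary automorphism are finitary, the number of \emph{non-finitary} restrictions of $p$ at level $n$ equals the number of length-$n$ walks in $\Gamma$ starting at $p$; and because $S(\alpha)$ is finite the finitary depths are uniformly bounded, so $a_n(p)$ differs from this walk-count by at most a bounded multiple of the walk-counts at the previous finitely many levels. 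Hence $\alpha$ is bounded if and only if the number of length-$n$ walks in $\Gamma$ from $\alpha$ stays bounded as $n\to\infty$.

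For the easier converse I would argue as follows. If $\alpha$ is finitary then $a_n(\alpha)=0$ for $n$ beyond its finitary depth, so it is trivially bounded. A directed automorphism $\beta$ of period $p$ has, at level $p$, exactly one restriction equal to $\beta$ and all the others finitary, so its unique non-finitary path is the periodic one and the $\Gamma$-walks from $\beta$ are unique, giving $a_n(\beta)$ bounded. Now suppose there is an $m$ such that every non-finitary $\alpha|_w$ with $w\in\mathrm{X}^m$ is directed. Decomposing at level $m$ gives
$$a_{m+k}(\alpha)=\sum_{w\in\mathrm{X}^m}a_k(\alpha|_w),$$
where each finitary term vanishes for large $k$ and each directed term is bounded by a common constant; since there are at most $d^m$ summands, $a_n(\alpha)$ is bounded and therefore $\alpha$ is bounded.

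The forward direction is the heart of the matter, and the key step is a growth dichotomy for $\Gamma$. I would first show that boundedness forces every vertex of $\Gamma$ lying on a cycle to have out-degree exactly one in $\Gamma$: if a cycle vertex $q$ had a second non-finitary restriction leaving the cycle, then by looping around the cycle and branching off at each return one produces at least linearly many length-$n$ walks from $q$, contradicting boundedness. Consequently each reachable cycle of $\Gamma$ is \emph{clean}, its vertices forming a deterministic periodic non-finitary path, and no cycle vertex can reach a vertex off the cycle, so once a walk enters a cycle it stays there. Since $\Gamma$ is finite, a non-finitary walk can visit transient (non-cyclic) vertices only during an initial segment of length at most $|S(\alpha)|$. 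Therefore, taking $m=|S(\alpha)|$, every non-finitary restriction $\alpha|_w$ with $w\in\mathrm{X}^m$ sits on a clean cycle; for such a vertex $q$, following the unique non-finitary path once around the cycle yields a word $w_0$ with $q|_{w_0}=q$ while every other restriction at that level has already branched into a finitary state, so $q$ is directed. This gives exactly the desired conclusion.

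The main obstacle I expect is making the growth dichotomy precise and robust: I must rule out not only a single branching cycle vertex but also configurations in which two distinct cycles are chained one after another along a walk, which likewise forces the walk-count to grow and hence $\alpha$ to be unbounded. The cleanest way to handle all cases uniformly is to invoke the standard fact that, in a finite directed graph, the number of length-$n$ walks from a fixed vertex is bounded precisely when no walk traverses a branching cycle or passes through two distinct cycles. Verifying that boundedness of $\alpha$ is equivalent to this condition, and then extracting directedness of the surviving cycle states, is the technical core of the argument.
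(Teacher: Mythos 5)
First, a point of reference: the paper does not prove this statement at all --- it is quoted from \cite{Sid00} as Sidki's structure theorem --- so there is no in-paper argument to compare yours against. Taken on its own, your proposal follows what is essentially the standard route: translate the activity growth of an automatic automorphism into walk-counts in the Moore diagram restricted to non-finitary states. The reduction of boundedness of $\alpha$ to boundedness of the walk-counts in $\Gamma$ is correct (non-finitary states are nontrivial, and since $S(\alpha)$ is finite the finitary states have uniformly bounded depth, which controls the discrepancy exactly as you say), and the converse direction is complete as written.

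Two points in the forward direction should be made explicit; both are easy but the second is genuinely load-bearing. First, $\Gamma$ must be a multigraph: if $x\neq y$ are letters with $p|_x=p|_y$ non-finitary, that is two edges, otherwise walk-counts do not equal the number of words $w$ with $p|_w$ non-finitary, and ``out-degree one'' must mean ``exactly one letter $x\in\mathrm{X}$ with $p|_x$ non-finitary.'' Second, your linear-growth argument at a branching cycle vertex silently uses that every vertex of $\Gamma$ has out-degree at least one, i.e.\ that a non-finitary automatic state always has a non-finitary first-level restriction (if every $p|_x$ were finitary, $p$ would be finitary of depth at most one more than the maximum of theirs). Without this, the walk that branches off the cycle could die before reaching length $n$ and you would get only boundedly many walks; with it, each of the roughly $n/\ell$ admissible branch times extends to at least one walk of length exactly $n$, and these are pairwise distinct because they first leave the cycle at different times, giving the contradiction. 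Once out-degree one on cycles is established, the ``two chained cycles'' configuration you worry about in your last paragraph cannot occur at all --- a walk entering a clean cycle can never leave it --- so no appeal to a general ``standard fact'' about walk growth is needed. Your choice $m=|S(\alpha)|$ then works because any walk of that length repeats a vertex, a repeated vertex lies on a (minimal closed walk, hence simple) cycle, the walk is thereafter trapped on that cycle, and its endpoint is directed with period the cycle length, exactly as you conclude.
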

By this theorem, we can define the depth of an automatic bounded automorphism.

\begin{defn}\label{depth}
Let $\alpha$ be an automatic bounded automorphism. Define its \emph{depth} as follows. If it is finitary, then its depth is just its finitary depth defined above. Otherwise, its depth is the smallest $m$ in Theorem \ref{struction of bounded automata}, which is also called the \emph{bounded depth}.
\end{defn}

\subsection{Asymptotic Dimension and Finite Decomposition Complexity}
In this section, we recall two conceptions in coarse geometry: asymptotic dimension and finite decomposition complexity (FDC).
Asymptotic dimension was first introduced by Gromov in 1993, but it didn't get much attention until G. Yu proved that the Novikov higher signature conjecture holds for groups with finite asymptotic dimension in 1998 \cite{Yu98}. Here we also recommend \cite{BD05} for reference. FDC is a conception which generalizes finite asymptotic dimension. It was recently introduced by E. Guentner, R. Tessera and G. Yu (\cite{GTY12}) to solve certain strong rigidity problem including the stable Borel conjecture. See also \cite{WC11}.

Let $X$ be a metric space and $r>0$. We call a family $\mathcal{U}=\{U_i\}$ of subsets in $X$ $r-$\emph{disjoint}, if for any $U\neq U'$ in $\mathcal{U}$, $d(U,U')>r$, where $d(U,U')=\mathrm{inf}\{d(x,x'):x\in U,x'\in U'\}$. We write
$$X=\bigsqcup_{r-disjoint}U_i$$
for this. We call a cover $\mathcal{V}$ \emph{uniformly bounded}, if sup$\{\mathrm{diam}(V):V\in\mathcal{V}\}$ is finite.

\begin{defn}
Let $X$ be a metric space. We say that the \emph{asymptotic dimension} of $X$ doesn't exceed $n$ and write asdim$X\leqslant n$, if for every $r>0$, the space $X$ can be covered by $n+1$ subspaces $X_0,X_1,\cdots,X_n$, and each $X_i$ can be further decomposed into some $r-$disjoint uniformly bounded subspaces:
$$X=\bigcup^n_{i=0}X_i,\mbox{\quad}X_i=\bigsqcup_{r-disjoint}X_{ij} \mbox{~~and~~}\sup_{i,j}\mbox{diam}X_{ij}<\infty.$$
We say asdim$X=n$, if asdim$X\leqslant n$ and asdim$X$ is not less than $n$.
\end{defn}
From the definition, it's easy to see that the asymptotic dimension of a subspace is not greater than that of the whole space. There are some other equivalent definitions for asymptotic dimension, but we are not going to focus on this and guide the readers to \cite{BD05} for reference. Now we introduce the notion of FDC which naturally generalizes finite asymptotic dimension.

\begin{defn}
A metric family $\mathcal{X}$ is called \emph{$r-$decomposable} over a metric family $\mathcal{Y}$ if for every $X\in\mathcal{X}$, there exists a decomposition:
$$X=X_0 \cup X_1,\mbox{\quad}X_i=\bigsqcup_{r-disjoint}X_{ij},$$
where $X_{ij}\in \mathcal{Y}$. It's denoted by $\mathcal{X}\stackrel{r}{\rightarrow}\mathcal{Y}$.
\end{defn}

\begin{defn}
(See \cite{GTY12}.)
\begin{itemize}
  \item Let $\mathcal{D}_0$ be the collection of all the bounded families.
  \item For any ordinal number $\alpha>0$, define:
        $$\mathcal{D}_\alpha=\{\mathcal{X}:\forall r>0,\exists\beta<\alpha, \exists \mathcal{Y}\in\mathcal{D}_\beta, \mbox{~such that~}\mathcal{X}\stackrel{r}{\rightarrow}\mathcal{Y}\}.$$
\end{itemize}
We call a metric family $\mathcal{X}$ has finite decomposition complexity (FDC) if there exists some ordinal number $\alpha$ such that $X$ is in $\mathcal{D}_\alpha$. There are other equivalent definitions for FDC, we recommend \cite{GTY12} for reference. We say a single metric space $X$ has FDC if $\{X\}$, viewed as a metric family, has FDC. In \cite{GTY12}, we know that $X$ has finite asymptotic dimension if and only if there exists a non-negative integer $n$, such that $X\in \mathcal{D}_n$.
\end{defn}

Next, we introduce some coarse permanence properties of asymptotic dimension and FDC. We state the following properties in the case that the metric family consists of only one metric space.
First let's recall some basic definitions in coarse geometry \cite{NY12}.
Let $X,Y$ be two metric spaces, and $f:X\rightarrow Y$ be a map.
\begin{itemize}
  \item $f$ is called \emph{bornologous} if there exists a non-decreasing proper function $\rho_1: \mathbb{R^+}\rightarrow \mathbb{R}$ such that for every $x,x'\in X$,
      $$d_Y(f(x),f(x'))\leqslant \rho_1(d_X(x,x'));$$
  \item $f$ is called \emph{effectively proper} if there exists a non-decreasing proper function $\rho_2: \mathbb{R^+}\rightarrow \mathbb{R}$ such that for every $x,x'\in X$,
      $$\rho_2(d_X(x,x'))\leqslant d_Y(f(x),f(x'));$$
  \item $f$ is called a \emph{coarse embedding}, if $f$ is both bornologous and effectively proper.
\end{itemize}
$X$ and $Y$ are called \emph{coarsely equivalent} if there exists a coarse embedding $f:X\rightarrow Y$ and $f(X)$ is a net in $Y$, i.e. there exists some constant $R>0$, such that for any $y\in Y$, there exists some $x\in X$ satisfying $d(f(x),y)<R$. Asymptotic dimension and FDC are coarse invariants. More explicitly, we have the following proposition.

\begin{prop}\label{coarse invariance of asdim}
Suppose two metric spaces $X$ and $Y$ are coarsely equivalent, then asdim$X$ = asdim$Y$; $X$ has FDC if and only if $Y$ has FDC.
\end{prop}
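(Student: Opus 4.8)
The plan is to isolate a single monotonicity lemma and derive both assertions from it. Namely, I would first prove: if $f\colon X\to Y$ is a coarse embedding with control functions $\rho_1,\rho_2$ as in the definitions above, then $\mathrm{asdim}\,X\le\mathrm{asdim}\,Y$, and moreover $\{X\}\in\mathcal{D}_\alpha$ whenever $\{Y\}\in\mathcal{D}_\alpha$. Both halves of the proposition then follow once I exhibit a \emph{coarse inverse}: because $f(X)$ is an $R$-net, for each $y\in Y$ I may choose $g(y)\in X$ with $d_Y(f(g(y)),y)<R$, and I claim $g\colon Y\to X$ is again a coarse embedding. Applying the lemma to $f$ and to $g$ then gives $\mathrm{asdim}\,X\le\mathrm{asdim}\,Y\le\mathrm{asdim}\,X$, and the analogous two-sided implication for membership in $\mathcal{D}_\alpha$, which is exactly coarse invariance of FDC.

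The mechanism behind the lemma is pull-back of covers. Fix $r>0$ and suppose $\mathrm{asdim}\,Y\le n$. I would set $s=\rho_1(r)$ and take a cover $Y=\bigcup_{i=0}^n Y_i$ with each $Y_i=\bigsqcup_{s\text{-}disjoint}Y_{ij}$ of uniformly bounded diameter $\le D$. Put $X_{ij}=f^{-1}(Y_{ij})$ and $X_i=\bigcup_j X_{ij}$. For boundedness: if $x,x'\in X_{ij}$ then $f(x),f(x')\in Y_{ij}$, so $\rho_2(d_X(x,x'))\le d_Y(f(x),f(x'))\le D$, and properness of $\rho_2$ bounds $\mathrm{diam}\,X_{ij}$ uniformly. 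For $r$-disjointness: if $x\in X_{ij}$ and $x'\in X_{ij'}$ with $j\ne j'$, then $d_Y(f(x),f(x'))>s$, while bornologousness gives $d_Y(f(x),f(x'))\le\rho_1(d_X(x,x'))$; since $\rho_1$ is non-decreasing and $s=\rho_1(r)$, this forces $d_X(x,x')>r$. Hence $\mathrm{asdim}\,X\le n$.

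For the FDC statement I would run the \emph{same} pull-back inside a transfinite induction on $\alpha$, stated at the level of metric families that coarsely embed into one another with uniform control functions $\rho_1,\rho_2$. The base case $\alpha=0$ is that effective properness turns a uniformly bounded family into a uniformly bounded family, again by properness of $\rho_2$. For the inductive step, given $r>0$ and $\{Y\}\in\mathcal{D}_\alpha$, I choose $\beta<\alpha$ and $\mathcal{Z}\in\mathcal{D}_\beta$ with $\{Y\}\stackrel{s}{\rightarrow}\mathcal{Z}$ for $s=\rho_1(r)$; pulling the two-piece $s$-disjoint decomposition back through $f$ produces a decomposition $\{X\}\stackrel{r}{\rightarrow}\mathcal{X}'$ where $\mathcal{X}'=\{f^{-1}(Z):Z\in\mathcal{Z}\}$ coarsely embeds into $\mathcal{Z}$ with the \emph{same} control functions. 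Since $\mathcal{D}_\beta$ is closed under passing to subfamilies, the induction hypothesis yields $\mathcal{X}'\in\mathcal{D}_\beta$, whence $\{X\}\in\mathcal{D}_\alpha$.

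The main obstacle is the verification that the coarse inverse $g$ is itself a coarse embedding, since this is the only place the net hypothesis enters and it requires manipulating the control functions rather than merely applying them. Concretely, from $d_Y(f(g(y)),f(g(y')))\le d_Y(y,y')+2R$ together with effective properness of $f$ one gets $\rho_2(d_X(g(y),g(y')))\le d_Y(y,y')+2R$, and from $d_Y(y,y')\le\rho_1(d_X(g(y),g(y')))+2R$ one gets the reverse estimate. To convert these into genuine bornologous and effectively proper control functions for $g$, I would pass to the generalized inverses of the non-decreasing proper functions $\rho_1,\rho_2$, e.g. $\rho_2^{-1}(t)=\sup\{u:\rho_2(u)\le t\}$, and check that these are again non-decreasing and proper. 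Everything else is the bookkeeping of composing these functions, which I would suppress.
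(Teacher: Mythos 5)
The paper does not actually prove this proposition: it is stated as a known fact, with \cite{BD05} cited for the asymptotic dimension part and \cite{GTY12} for the FDC part. Your argument is the standard one found in those references, and it is correct: reducing everything to a one-sided monotonicity lemma for coarse embeddings, producing a coarse inverse from the net hypothesis, pulling back covers to handle asymptotic dimension, and running a transfinite induction at the level of metric families (with uniform control functions) to handle $\mathcal{D}_\alpha$. Two small points of hygiene. First, with $s=\rho_1(r)$ you get $d_X(x,x')>r$ for every \emph{pair} of points lying in distinct pieces $X_{ij},X_{ij'}$, but the definition of $r$-disjointness requires $\inf_{x,x'} d_X(x,x')>r$, and a pointwise strict inequality does not force the infimum to exceed $r$; taking $s=\rho_1(2r)$ (so each pairwise distance exceeds $2r$ and the infimum is at least $2r>r$) closes this. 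Second, the auxiliary facts you lean on --- that $\mathcal{D}_\beta$ is closed under passing to subfamilies, and that the generalized inverses $\rho_2^{-1}(t)=\sup\{u:\rho_2(u)\le t\}$ and $\rho_1^{-1}(s)=\inf\{u:\rho_1(u)\ge s\}$ are finite-valued, non-decreasing and proper --- are true but each deserves its own one-line verification (the first by an easy transfinite induction, the second from properness and monotonicity of $\rho_1,\rho_2$). With those details supplied, the proof is complete.
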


We have the following proposition for the subspace case.
\begin{prop}\label{subspace for asdim}
If $X$ is a subset of some metric space $Y$ equipped with the induced metric, then $asdimX\leqslant asdimY$; And if $Y$ has FDC, so does $X$.
\end{prop}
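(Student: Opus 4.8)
The plan is to treat the two assertions separately, both resting on the elementary remark that $r$-disjointness and uniform boundedness are monotone under passage to subsets: whenever $A'\subseteq A$ and $B'\subseteq B$ one has $d(A',B')\geqslant d(A,B)$ and $\operatorname{diam}(A')\leqslant\operatorname{diam}(A)$, and, since $X$ carries the induced metric, distances between points of $X$ are computed the same way in $X$ as in $Y$.

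For the first assertion I would assume $\operatorname{asdim}Y\leqslant n$ and fix $r>0$. Choosing a cover $Y=\bigcup_{i=0}^n Y_i$ with each $Y_i=\bigsqcup_{r-disjoint}Y_{ij}$ uniformly bounded, I set $X_i=Y_i\cap X$ and $X_{ij}=Y_{ij}\cap X$. The $X_i$ cover $X$ because the $Y_i$ cover $Y\supseteq X$; moreover $d(X_{ij},X_{ij'})\geqslant d(Y_{ij},Y_{ij'})>r$ and $\operatorname{diam}(X_{ij})\leqslant\operatorname{diam}(Y_{ij})$, so each family $\{X_{ij}\}_j$ is $r$-disjoint and uniformly bounded. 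Hence $\operatorname{asdim}X\leqslant n$, and taking $n=\operatorname{asdim}Y$ gives the inequality.

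For the second assertion the natural route is a transfinite induction on the ordinal $\alpha$, establishing the stronger family statement: \emph{if $\mathcal{Y}\in\mathcal{D}_\alpha$ and every member of a metric family $\mathcal{X}$ is a subspace of some member of $\mathcal{Y}$, then $\mathcal{X}\in\mathcal{D}_\alpha$}. The base case $\alpha=0$ is immediate, since diameters do not increase under restriction, so subspaces of a bounded family again form a bounded family. For the inductive step, given $r>0$ I would invoke the defining decomposition $\mathcal{Y}\stackrel{r}{\rightarrow}\mathcal{Z}$ with $\mathcal{Z}\in\mathcal{D}_\beta$ for some $\beta<\alpha$, and intersect each piece with the relevant subspace exactly as in the first part to obtain $\mathcal{X}\stackrel{r}{\rightarrow}\mathcal{X}'$, where $\mathcal{X}'$ is the family of all the resulting intersected pieces. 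Every member of $\mathcal{X}'$ is a subspace of a member of $\mathcal{Z}$, so the induction hypothesis yields $\mathcal{X}'\in\mathcal{D}_\beta$, whence $\mathcal{X}\in\mathcal{D}_\alpha$. Specializing to $\mathcal{X}=\{X\}$ and $\mathcal{Y}=\{Y\}$ gives the claim for single spaces.

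I expect the only real subtlety to lie in the bookkeeping of the transfinite induction---correctly assembling the descendant family $\mathcal{X}'$ of all intersected pieces and verifying that it meets the subspace hypothesis relative to $\mathcal{Z}$---rather than in any geometric difficulty, since the monotonicity of distances and diameters does all of the metric work.
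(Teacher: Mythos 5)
Your proposal is correct, and in fact the paper offers no proof of this proposition at all: it is stated as a standard permanence property (with \cite{BD05} and \cite{GTY12} as the background references), so there is nothing to compare against. Your argument --- intersecting the covers/decomposition pieces with the subspace and using monotonicity of distances and diameters, plus transfinite induction on $\alpha$ for the FDC half --- is exactly the standard proof from the literature, and the inductive bookkeeping you flag is handled correctly.
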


Now we turn to the case of groups. Suppose $G$ is a finitely generated group with a finite generating set $\Sigma$ which is symmetric in the sense that if $\sigma\in \Sigma$, then $\sigma^{-1}\in \Sigma$. $G$ can be equipped with a word length function $l$:
$$l(g)=\mbox{min} \big\{~ n ~\big|~ g=\sigma_1\sigma_2\cdots\sigma_n,n\in \mathbb{N},\sigma_i\in\Sigma   ~\big\}.$$
Then the word length metric is induced by the formula $d(g,h)=l(gh^{-1})$.
It can be shown that for any two finite generating sets, the induced word length metric are coarsely equivalent.

The word length metric induced by a finite generating set is proper in the sense that every ball with finite radius has finitely many elements. Furthermore, it can be shown that given two proper length functions on a group $G$, the two induced length metrics are coarsely equivalent. So we can use any proper length function on the group.

\begin{prop}\label{extension}
Let $G,H$ be two groups with FDC, and let $K$ be an extension of $G$ by $H$, i.e. there exists some short exact sequence:
$1\rightarrow G \rightarrow K\rightarrow H\rightarrow 1$, then $K$ also has FDC. In particular, let $H$ be a normal subgroup of $G$, and suppose $H$ and $G/H$ have FDC, then $G$ also has FDC. More precisely, if $H\in\mathcal{D}_\alpha$ and $G/H\in\mathcal{D}_\beta$, then $G\in\mathcal{D}_{\beta+\alpha}$.
\end{prop}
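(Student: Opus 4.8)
The plan is to reduce everything to the \emph{fibering} behaviour of decomposition complexity under the quotient map and then argue by transfinite induction. Since the kernel of any group extension is normal, all three assertions are the same statement, so it suffices to treat a normal subgroup $H\trianglelefteq G$ with $H\in\mathcal{D}_\alpha$ and quotient $Q:=G/H\in\mathcal{D}_\beta$, and to show $G\in\mathcal{D}_{\alpha+\beta}$; for the finite ordinals occurring in our applications $\alpha+\beta=\beta+\alpha$, which is the form stated, and any such membership yields FDC. Fix a finite symmetric generating set $S$ of $G$, equip $Q$ with $\pi(S)$ (where $\pi:G\to Q$ is the quotient map), and use the associated word metrics.

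I would first record the two geometric inputs. (i) The map $\pi$ is $1$-Lipschitz: a length-$n$ word in $S$ maps to a word of length at most $n$ in $\pi(S)$, so $d_Q(\pi g,\pi g')\le d_G(g,g')$. (ii) For every $R>0$ and every $B\subseteq Q$ with $\mathrm{diam}_Q B\le R$, the preimage $\pi^{-1}(B)$, with the metric induced from $G$, is coarsely equivalent to $H$ with constants depending only on $R$. Indeed, fix $q_0\in B$ and a lift $g_0$; by normality $\pi^{-1}(q_0)=Hg_0$, and the right translation $x\mapsto xg_0^{-1}$ is an isometry of $\pi^{-1}(q_0)$ onto $H$ with its induced metric (itself coarsely equivalent to the intrinsic word metric of $H$). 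For any $g\in\pi^{-1}(B)$, lift $\pi(g)q_0^{-1}$ — a word of length at most $R$ in $Q$ — to some $w\in G$ with $l_G(w)\le R$ and $\pi(w)=\pi(g)q_0^{-1}$; then $\pi(w^{-1}g)=q_0$, so $w^{-1}g\in\pi^{-1}(q_0)$ and
$$d_G\big(g,\,w^{-1}g\big)=l_G\big(g(w^{-1}g)^{-1}\big)=l_G(w)\le R.$$
Hence $\pi^{-1}(q_0)\subseteq\pi^{-1}(B)$ is $R$-dense, and the isometric inclusion $\pi^{-1}(q_0)\hookrightarrow\pi^{-1}(B)$ is a coarse equivalence. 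By the family version of the coarse invariance in Proposition \ref{coarse invariance of asdim} (see \cite{GTY12}), the family $\{\pi^{-1}(B):\mathrm{diam}_Q B\le R\}$ lies in $\mathcal{D}_\alpha$ for each $R$.

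I would then prove the fibering statement by induction on $\beta$: if $f\colon\mathcal{X}\to\mathcal{Z}$ is $1$-Lipschitz with $\mathcal{Z}\in\mathcal{D}_\beta$ and the preimages of $R$-bounded sets lie in $\mathcal{D}_\alpha$ uniformly in $R$, then $\mathcal{X}\in\mathcal{D}_{\alpha+\beta}$; our case is $f=\pi$, $\mathcal{X}=\{G\}$, $\mathcal{Z}=\{Q\}$. For $\beta=0$ the base is bounded, so $G$ is itself a fibre and lies in $\mathcal{D}_\alpha$. For $\beta>0$ and $r>0$, choose $\beta_0<\beta$ and a decomposition $Q\stackrel{r}{\rightarrow}\{Q_{ij}\}$ with $\{Q_{ij}\}\in\mathcal{D}_{\beta_0}$, say $Q=Q_0\cup Q_1$ and $Q_i=\bigsqcup_{r-disjoint}Q_{ij}$. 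Pulling back along $\pi$ gives $G=\pi^{-1}(Q_0)\cup\pi^{-1}(Q_1)$, and by (i) the pieces $\{\pi^{-1}(Q_{ij})\}_j$ are again $r$-disjoint. The restriction of $\pi$ realises $\{\pi^{-1}(Q_{ij})\}$ as a fibering over $\{Q_{ij}\}\in\mathcal{D}_{\beta_0}$ with fibres in $\mathcal{D}_\alpha$, so by induction this family lies in $\mathcal{D}_{\alpha+\beta_0}$. Thus $G\stackrel{r}{\rightarrow}\{\pi^{-1}(Q_{ij})\}\in\mathcal{D}_{\alpha+\beta_0}$ with $\alpha+\beta_0<\alpha+\beta$; as $r$ was arbitrary, $G\in\mathcal{D}_{\alpha+\beta}$.

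The two delicate points are where I expect the real work. The first is input (ii): without normality of $H$ the fibre $\pi^{-1}(q_0)$ is only a left coset, on which right translation need not be an isometry, so normality is genuinely needed to keep all fibres \emph{uniformly} coarsely equivalent to a single model $H$ (the identification of the induced and the intrinsic metrics on $H$, used implicitly, is a further routine but necessary check). The second is the ordinal bookkeeping: the inductive step needs $\beta_0<\beta\Rightarrow\alpha+\beta_0<\alpha+\beta$, i.e. strict monotonicity of ordinal addition in the \emph{right} variable, which fails for the left variable; this is exactly why the subgroup complexity $\alpha$ must be placed first and why the induction is carried out on the quotient complexity $\beta$.
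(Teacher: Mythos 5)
The paper does not actually prove this proposition; it is quoted as a permanence property from \cite{GTY12}, so there is no in-house argument to compare against. Your proposal is the standard fibering proof from \cite{GTY12}, and it is correct: the two geometric inputs are established cleanly (the quotient map is $1$-Lipschitz for the projected generating set; the fibre over $q_0$ is the coset $Hg_0$, on which right translation is an isometry because the paper's word metric $d(g,h)=l(gh^{-1})$ is right-invariant, and your lifting argument shows this coset is $R$-dense in $\pi^{-1}(B)$ uniformly over $B$ of diameter at most $R$), and the transfinite induction on the complexity of the base is the right engine. Two facts you invoke without proof are genuinely needed but are available in \cite{GTY12}: that each class $\mathcal{D}_\alpha$ (not merely the union of all of them) is closed under uniform coarse equivalences of \emph{families}, and that the fibering statement must be phrased for families so that the inductive hypothesis applies to $\{\pi^{-1}(Q_{ij})\}$ over $\{Q_{ij}\}$; you flag both. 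Your remark on the order of the ordinals is a genuine and correct point rather than pedantry: the induction needs $\beta_0<\beta\Rightarrow\alpha+\beta_0<\alpha+\beta$, which holds for addition on the right but fails on the left, so the argument delivers $G\in\mathcal{D}_{\alpha+\beta}$ (subgroup complexity first), whereas the proposition as printed asserts $\mathcal{D}_{\beta+\alpha}$; for infinite ordinals these classes can differ (e.g.\ $1+\omega=\omega<\omega+1$), and the printed form is not what the fibering argument yields. Since every application in the paper has $\alpha,\beta<\omega$, where addition commutes, nothing downstream is affected. The only remaining caveat is your assumption of a finite generating set; for the general statement one uses proper length functions as in Section 2.5 (the restriction of a proper length function on $G$ to $H$ is again proper), and the identical argument goes through.
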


\begin{ex}
Let $\mathbb{Z}$ be the integer number, then:
\begin{enumerate}[1)]
  \item asdim$(\mathbb{Z}^n)$ = $n$ for all $n\in \mathbb{N}$;
  \item $\bigoplus \mathbb{Z}$ (countable infinite direct sum) $\in \mathcal{D}_\omega$, where $\omega$ is the smallest infinite ordinal number.
\end{enumerate}
\end{ex}

\section{Bounded automata group and its mother group}

In this section we introduce our main object, a series of universal bounded automata groups in the sense that every finitely generated bounded automata group can be embedded into some wreath product of one of them.

\subsection{The Mother Group}
\begin{defn}\label{mother group}(See \cite{BKN10}.)
Let $S_d$ be the permutation group of $d$ elements, and $B_d=S_d \wr (d-1)=S_d^{d-1}\rtimes S_{d-1}$, $F_d=S_d\ast B_d$ be the free product of $S_d$ and $B_d$. Define the self similar  structure on $F_d$ recursively as
$$S_d \ni a \mapsto (1,\cdots, 1)a, \mbox{\ and\ } B_d \ni b=(b_1,\cdots, b_{d-1})\sigma \mapsto (b_1,\cdots, b_{d-1},b)\sigma.$$
Then $F_d$ is a self similar group, and there is a natural homomorphism from $F_d$ to $\mathrm{Aut}(T_d)$ explained in section 2. Define $G_d$ to be the image of $F_d$ in $\mathrm{Aut}(T_d)$, and we call $G_d$ the \emph{mother group} of degree $d$.
\end{defn}

It's easy to see that $G_d$ contains two subgroups $S_d$ and $B_d$, and it is finitely generated by $S_d\cup B_d$ for every $d$, and we will fix this special generating set in our discussion of the word length metric on $G_d$.

First let's analyse the structure of $G_2$. It is a subgroup in the automorphism group of the rooted binary tree generated by two recursively defined automorphisms
$$a=(1,1)\sigma, \quad b=(\sigma,b),$$
where $\sigma=(12) \in S_2$. By induction, this group is just the free product of the group having two elements with itself, i.e. $G_2=\mathbb{Z}_2\ast\mathbb{Z}_2$.

In \cite{BKN10}, an embedding theorem for finitely generated bounded automata groups has been proven as follows.

\begin{thm}\label{embedding theorem}(See \cite{BKN10}.)
Any finitely generated subgroup $G$ of $\mathfrak{B}\mathfrak{A}\mathrm(X)$ can be embedded as a subgroup into the wreath product $G_{d^n}\wr d^n$ for some integer $n$, where $d$ is the cardinality of $\mathrm{X}$.
\end{thm}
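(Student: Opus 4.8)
The plan is to exhibit the embedding explicitly by regrouping the tree $\mathrm{X}^*$ into blocks of $n$ letters for a suitable $n$, and then reading off the wreath decomposition of Proposition \ref{decomposition of automorphisms} at that level. Set $Y=\mathrm{X}^n$, so that $|Y|=d^n$ and the $d$-regular tree $\mathrm{X}^*$ is canonically identified with the $d^n$-regular tree $Y^*$. Under this identification the isomorphism $\psi$ applied at level $n$ reads
$$
g \longmapsto \big(g|_v\big)_{v\in Y}\,\bar\sigma,
$$
where $\bar\sigma\in S_{d^n}$ is the action of $g$ on the first level $Y$ of $Y^*$ (equivalently, the action of $g$ on the first $n$ levels of $\mathrm{X}^*$), and the $g|_v$ are the restrictions, now viewed as automorphisms of $Y^*$. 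This is a priori a homomorphism $G\to\mathrm{Aut}(Y^*)\wr d^n$, and since $\psi$ is injective so is this map. Therefore it suffices to choose $n$ so that each restriction $g|_v$ lands inside the copy of $G_{d^n}$; because restriction is multiplicative, $(gh)|_v=g|_{h(v)}\,h|_v$, and $G_{d^n}$ is a group, it is in fact enough to verify this on a fixed finite generating set $g_1,\dots,g_k$ of $G$.

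Next I would use Sidki's structure theorem to pin down $n$. Each $g_i$ is bounded automatic, hence by Theorem \ref{struction of bounded automata} has a finite state set and a well-defined depth in the sense of Definition \ref{depth}; moreover every non-finitary state is directed, with some period. Let $m$ be larger than all the depths of the $g_i$, and let $p$ be a common multiple of all the periods of the directed states occurring among the $S(g_i)$; take $n$ to be a common multiple of $m$ and $p$ (so in particular $n\ge m$). With this choice, for every generator $g_i$ and every $v\in Y=\mathrm{X}^n$ the restriction $g_i|_v$ is either finitary or directed, and in the directed case its distinguished periodic path $w_0^\infty$ has period dividing $n$. Passing to the alphabet $Y$, such a period-$n$ path becomes a \emph{constant} path in $Y^*$: the directed automorphism always proceeds along a single super-letter $W=w_0^{n/|w_0|}\in Y$, with finitary decorations on the side branches. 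This is precisely the shape of the directed generators of $B_{d^n}$, whose self-similar structure in Definition \ref{mother group} sends $b\mapsto(b_1,\dots,b_{d^n-1},b)\sigma$, i.e. $b$ reproduces itself along the last direction and fixes that direction at each branch.

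The heart of the argument is then the following two facts about $G_{d^n}$, which I would prove by induction on depth. First, $G_{d^n}$ contains every finitary automorphism of $Y^*$: the generators $S_{d^n}$ give the depth-one finitary automorphisms directly, and the subgroup $B_{d^n}=S_{d^n}\wr(d^n-1)$ is designed exactly so that one can place an arbitrary $S_{d^n}$-decoration in each of the first $d^n-1$ coordinates while the recursion handles the last coordinate, producing all finitary automorphisms of arbitrary depth. Second, after conjugating by a finitary element of $G_{d^n}$ that carries the super-letter $W$ to the last letter of $Y$, any directed automorphism of the constant-path form above coincides with a product of a generator of $B_{d^n}$ and finitary decorations, hence lies in $G_{d^n}$. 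Combining these two facts shows $g_i|_v\in G_{d^n}$ for all $i$ and all $v$, and the top permutation $\bar\sigma$ lies in $S_{d^n}\subseteq G_{d^n}\wr d^n$, completing the embedding $G\hookrightarrow G_{d^n}\wr d^n$.

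I expect the main obstacle to be the bookkeeping in these two structural facts — in particular, verifying that $B_{d^n}=S_{d^n}\wr(d^n-1)$ really generates all finitary automorphisms together with the required directed ones, with the correct matching of paths and decorations (note that the branch permutation of a directed element must fix its distinguished direction, which is consistent with the factor $S_{d^n-1}$ appearing in $B_{d^n}$). The combinatorics of tracking how a bounded automorphism's portrait, namely its finite finitary kernel near the root together with the directed automorphisms hanging off at level $m$, reassembles over the coarser alphabet $Y$, and confirming that no new directed directions are created which $B_{d^n}$ cannot reach, is where the real work lies; the homomorphism and injectivity statements are then formal consequences of $\psi$ being an isomorphism.
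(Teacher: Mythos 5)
Your setup --- enlarging the alphabet to $Y=\mathrm{X}^n$ with $n$ a common multiple of the depths and periods, reducing to the finite generating set via multiplicativity of restrictions, and invoking the fact that $G_{d^n}$ contains all finitary automorphisms --- matches the first stage of the paper's proof. The gap is in your second structural fact. Conjugating a directed automorphism $\alpha$ (with $\alpha|_W=\alpha$ and finitary side states) by a finitary element $f$ does \emph{not} yield an element of the $B_{d^n}$-form: since $f|_u=1$ once $|u|$ exceeds the finitary depth of $f$, one has $(f^{-1}\alpha f)|_v=\alpha|_{f(v)}$ for all deep $v$, so the deep state of $f^{-1}\alpha f$ along its active ray is still $\alpha$ itself, directed along $W^\infty$; the active ray of the conjugate is $f^{-1}(W^\infty)=uW^\infty$. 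Finitary conjugation only changes a finite prefix of the active ray, never its tail. And the tail is a real obstruction to membership in $G_{d^n}$: writing $D=d^n$ for the last letter and $A(g)$ for the (finite) set of rays $\xi$ with $g|_{\xi_1\cdots\xi_k}\neq 1$ for infinitely many $k$, the generators of $G_D$ satisfy $A(a)=\emptyset$ for $a\in S_D$ and $A(b)\subseteq\{D^\infty\}$ for $b\in B_D$, the $G_D$-orbit of $D^\infty$ consists only of rays with tail $D^\infty$, and $A(gh)\subseteq h^{-1}(A(g))\cup A(h)$; hence every element of $G_D$ has all its active rays eventually constant equal to $D$. A restriction $g_i|_v$ directed along $W^\infty$ with $W\neq D$ therefore lies in $G_{d^n}$ neither before nor after finitary conjugation. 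Nor does a global relabelling of $Y$ help, since distinct generators (or distinct states of one generator) may be directed along distinct constant rays, and one permutation cannot send them all to $D$.

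This is precisely the point where the paper (following \cite{BKN10}) does something you cannot avoid: after normalizing the periods it conjugates the whole group $H$ by the single \emph{unbounded} spiraling automorphism $\delta=(\delta\zeta_x^{-1})_{x\in\mathrm{X}'}$, which simultaneously realigns every constant active ray $z^\infty$, for every letter $z$ at once, to the fixed ray $(o')^\infty$ up to the finitary corrections $\zeta_z(\cdot)\zeta_{\sigma(z)}^{-1}$ that the mother group does contain. Because $\alpha\mapsto\alpha^\delta$ is one global conjugation, it is automatically an injective homomorphism, which also disposes of the secondary problem in your plan that conjugating each coordinate $g|_v$ by a separately chosen element need not assemble into a homomorphism into the wreath product (conjugating by $(c_v)_v$ produces coordinates $c_v^{-1}g|_v c_{\bar\sigma(v)}$, mixing the correctors). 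To complete your argument you must replace the finitary conjugator by such a $\delta$ (or an equivalent device) and then verify, as the paper does, that the $\delta$-conjugates of the generators' states land in $G_{d^n}$.
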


\begin{proof}
Suppose $G$ is generated by a finite set $S$. Let $Q=\bigcup\limits_{\alpha\in S} S(\alpha)$ be all states in the generators $S$, and $F$ be the set of finitary elements in $Q$. Let $m$ be an integer greater than the depths of all elements in $Q$, and $l$ be a common multiple of the periods of directed states in $Q$.

First, let's \texttt{kill} the finitary elements with depth greater than $1$ in $S$. Let $R=\{q|_\omega:q\in Q, \omega \in \mathrm{X}^m\}$, and $H=\langle R \rangle$ be the subgroup in $G$ generated by $R$. There is a natural embedding by Proposition \ref{decomposition of automorphisms} $m$ times:
$$G \hookrightarrow H\wr d\wr \cdots \wr d,$$
where there are $m$ times wreath products.

Next we change the alphabet to make the periods of elements in $Q$ to be 1. Replace $\mathrm{X}$ by $\mathrm{X}'=\mathrm{X}^l$, and let $T=\mathrm{X}^*$ and $T'=(\mathrm{X}')^*$. It is convenient to regard $T'$ as a subtree of $T$ consisting of all the levels which are multiples of $l$. $H$ can be viewed as a group of automatic automorphisms of $T'$. Fix a letter $o' \in \mathrm{X}'$ and a transitive cycle $\zeta \in S_d$. For any $x \in \mathrm{X}'$, put $\zeta_x=\zeta^i$ for the unique $i$ mod $|\mathrm{X}'|$ such that $x=\zeta^i(o')$. Define $\delta \in \mathrm{Aut}(T')$ by $\delta=(\delta'_x)_{x\in \mathrm{X}'}=(\delta \zeta_x^{-1})_{x\in \mathrm{X}'}$, i.e.
$$\delta: \zeta^{i_1}(o')\zeta^{i_2}(o')\cdots \zeta^{i_n}(o') \mapsto \zeta^{i_1}(o')\zeta^{i_2-i_1}(o')\cdots \zeta^{i_n-i_{n-1}}(o').$$
For any $\alpha=(\alpha'_x)_{x\in \mathrm{X}'}\sigma \in \mathrm{Aut}(T')$, its $\delta-$conjugate is
$$\alpha^\delta=\delta^{-1}\alpha\delta=\Big(\delta_x'^{-1} \alpha'_x \delta'_{\sigma(x)}\Big)_{x\in \mathrm{X}'}\sigma=\Big(\zeta_x \alpha_x'^\delta \zeta_{\sigma(x)}^{-1}\Big)_{x\in \mathrm{X}'}\sigma.$$
Each $\alpha \in R$ either belongs to $F$ or has the property that
$$
\alpha'_z
    \left\{
        \begin{array}{l}
            =\alpha \mbox{~~for precisely one letter~~} z \in \mathrm{X}',\\
            \mbox{is finitary whenever~~} x\neq z.
        \end{array}
    \right.
$$
In the latter case, consider $\beta=\zeta_z \alpha^\delta \zeta_{\sigma(z)}^{-1}$, then $\beta=(\beta'_x)_{x\in \mathrm{X}'}\rho'$ with $\beta'|_{o'}=\beta$, $\beta'_x$ is finitary for any $x\in \mathrm{X}'\backslash\{o'\}$, and $\rho'=\zeta_z \sigma \zeta_{\sigma(z)}^{-1}$ satisfies $\rho'(o')=o'$. In other words, we just change the fixed letter $\sigma$ to the fixed letter $o'$.

Up till now, all of the bounded depths of elements in $H$ have been changed to 1, we only need to change the finitary elements in $H$ to have depths 1.
Let $m'$ be an integer greater than all the finitary depths of $\beta_x$ above for all $x\in \mathrm{X}'\backslash \{o'\}$ and $\alpha \in R$. Enlarge once more the alphabet $\mathrm{X}'$ to $\mathrm{X}''=(\mathrm{X}')^{m'}$, and put $o''=(o')^{m'}$. Then all of the $\beta$ as above have the decomposition $\beta=(\beta''_x)_{x\in \mathrm{X}''}\rho''$ with $\rho(o'')=o''$ and $\beta''_{o''}=\beta$, and all of the other automorphisms $\beta''_x$ for $x\in \mathrm{X}''\backslash \{o''\}$ are finitary with depth at most 1 with respect to the new alphabet $\mathrm{X}''$. Therefore $\beta$ belongs to $G_{|\mathrm{X}''|}$. Note that $\zeta\in G_{|\mathrm{X}''|}$, so the $\delta-$conjugate $\alpha^\delta$ belongs to $G_{|\mathrm{X}''|}$, which implies the $\delta-$conjugate of $H$ is a subgroup in $G_{|\mathrm{X}''|}$.
\end{proof}

By the above theorem, it is important to study the property of the mother groups. Here we just mention a simple fact of the mother groups. It's obvious so we only give a sketch of the proof.
\begin{lem}\label{mother subgroup}
There is a natural embedding of $G_d$ into $G_{d+1}$ for all $d\geqslant 2$.
\end{lem}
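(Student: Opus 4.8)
The plan is to realize the $d$-regular tree $T_d$ as a suitable subtree of the $(d+1)$-regular tree $T_{d+1}$ and then to \emph{prolong} every automorphism of $T_d$ to an automorphism of $T_{d+1}$ that acts trivially off this subtree. The only delicate point is \emph{which} subtree to use. Writing $\mathrm{X}_d=\{1,\dots,d\}$ and $\mathrm{X}_{d+1}=\{1,\dots,d+1\}$, I would fix the letter-injection $\phi\colon \mathrm{X}_d\to\mathrm{X}_{d+1}$ with $\phi(i)=i$ for $i<d$ and $\phi(d)=d+1$, so that $\phi$ omits the letter $d$ and, crucially, sends the \emph{last} letter of $\mathrm{X}_d$ to the \emph{last} letter of $\mathrm{X}_{d+1}$. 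Applying $\phi$ letterwise identifies $T_d$ with the subtree $T'\subseteq T_{d+1}$ of all words over $\{1,\dots,d-1,d+1\}$. Every $g\in\mathrm{Aut}(T_d)$ transports to an automorphism $g'$ of $T'$, and since $T_{d+1}=T'\sqcup\bigsqcup_{w\in T'}(wd)\mathrm{X}_{d+1}^*$, one extends $g'$ to $\iota(g)\in\mathrm{Aut}(T_{d+1})$ by letting it act as $g'$ on $T'$ and by the identity on each hanging copy $(wd)\mathrm{X}_{d+1}^*$ (relabelling $wdu\mapsto g'(w)du$). A direct check shows $\iota$ is an injective homomorphism $\mathrm{Aut}(T_d)\to\mathrm{Aut}(T_{d+1})$.

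It then remains to verify $\iota(G_d)\subseteq G_{d+1}$, which I would do by computing the images of the generators $S_d\cup B_d$ and reading them through the wreath recursion of Definition \ref{mother group}. A finitary $a\in S_d$ prolongs to the relabelled permutation $\phi_*(a)\in S_{d+1}$ (which fixes the omitted letter $d$), hence $\iota(a)\in S_{d+1}\subseteq G_{d+1}$. For a directed generator $b=(b_1,\dots,b_{d-1})\sigma\in B_d$, whose wreath recursion returns the self-copy on child $d$, the prolongation should be $c=(\phi_*(b_1),\dots,\phi_*(b_{d-1}),1)\sigma\in B_{d+1}$; one checks via Proposition \ref{decomposition of automorphisms} that the self-similar structure $c\mapsto(\phi_*(b_1),\dots,\phi_*(b_{d-1}),1,c)\sigma$ exactly reproduces, on $T'$, the action of $b$ on $T_d$: the self-copy of $c$ now sits on child $d+1=\phi(d)$, the finitary data $b_i$ sit on children $1,\dots,d-1$, and the omitted child $d$ carries the trivial restriction, matching the fact that $\iota(b)$ is the identity on the hanging subtrees. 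Thus $\iota(b)\in B_{d+1}\subseteq G_{d+1}$.

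Since $\iota$ is a homomorphism on $\mathrm{Aut}(T_d)$ and carries each generator of $G_d$ into the generating set $S_{d+1}\cup B_{d+1}$ of $G_{d+1}$, it restricts to a homomorphism $G_d\to G_{d+1}$; injectivity is inherited from the prolongation (if $\iota(g)=1$ then $g'=1$ on $T'$, so $g=1$), and because generators map to generators the map is $1$-Lipschitz, making it the claimed natural embedding. I expect the main obstacle to be precisely the choice of $\phi$: the directed generators of $B_d$ reproduce themselves along the branch of the \emph{last} letter, so one must align the last letter of $\mathrm{X}_d$ with the last letter of $\mathrm{X}_{d+1}$. Had I instead used the ``first $d$ letters'' subtree (omitting $d+1$), the recursive self-copy of a directed $b$ would be forced onto the omitted, trivialised branch, and the prolongation would fail to land in $G_{d+1}$; verifying that the chosen $\phi$ avoids this, and that $\phi_*$ is compatible with the semidirect-product structure $B_d=S_d^{d-1}\rtimes S_{d-1}\hookrightarrow S_{d+1}^{d}\rtimes S_d=B_{d+1}$, is the real content behind the word ``obvious''.
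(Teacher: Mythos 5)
Your proposal is correct and is essentially the paper's own argument: the paper's sketch embeds the alphabet via $k\mapsto k+1$ (omitting the letter $1$) and prolongs the generators $S_d\cup B_d$ into $S_{d+1}\cup B_{d+1}$, while you omit the letter $d$ instead; both choices align the last letter of $\mathrm{X}_d$ with the last letter of $\mathrm{X}_{d+1}$, which is exactly the point you rightly identify as the crux, since the self-copy of a directed generator must land on the branch carrying the self-copy in the recursion defining $B_{d+1}$. Your write-up just makes explicit the verification that the paper leaves as ``obvious.''
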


\noindent {\bf Sketch of proof of Lemma \ref{mother subgroup}.}
There is an embedding of $S_d$ into $S_{d+1}$ which is induced by the embedding of $\{1,2,\ldots,d\}$ into $\{1,2,\ldots,d+1\}$ given by $k\mapsto k+1$. Recall that $G_d$ is generated by $S_d\cup B_d$, so the above induces an embedding of $S_d$ and $B_d$ into $G_{d+1}$, which can also induce the required embedding $G_d$ into $G_{d+1}$.

\subsection{ Asymptotic Dimension of the Mother Group}
It has been proven in \cite{Smi07} that the Grigorchuk group $\mathbb{G}$ has infinite asymptotic dimension, and from the above embedding theorem \ref{embedding theorem}, we know that there exists an integer $d>0$ such that $\mathbb{G}$ can be viewed as a subgroup in $G_{d^n}\wr d^n$, where $G_{d^n}$ is one of the mother groups. First we want to get an explicit $d$ with such property.
By the method in the proof of the embedding theorem, $\mathbb{G}$ can be embedded into $G_{2^3}\wr 2^3$. So the mother group $G_{2^3}$ has infinite asymptotic dimension, and from Lemma \ref{mother subgroup}, we know that for any integer $d\geqslant2^3$, $G_d$ has infinite asymptotic dimension.

We can prove a stronger theorem that all of the mother groups $G_d$ for $d>2$ have infinite asymptotic dimensions. This is our first main theorem as follows.
\begin{thm}\label{asymptotic dimension of the mother groups}
For any $d>2$, $G_d$ has infinite asymptotic dimension.
\end{thm}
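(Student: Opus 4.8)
The plan is to bound $\mathrm{asdim}(G_d)$ below by arbitrarily large integers, by exhibiting free abelian subgroups of unbounded rank that sit \emph{undistortedly} inside $G_d$. Concretely, I would combine Proposition \ref{subspace for asdim} with the fact (recorded in the Example of Section 2) that $\mathrm{asdim}(\mathbb{Z}^n)=n$: if for every $n$ I can find a subgroup $H_n\leqslant G_d$ with $H_n\cong\mathbb{Z}^n$ on which the metric induced from the word metric of $G_d$ is quasi-isometric to the standard word metric of $\mathbb{Z}^n$, then $\mathrm{asdim}(G_d)\geqslant\mathrm{asdim}(H_n)=n$ for all $n$, and hence $\mathrm{asdim}(G_d)=\infty$. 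Everything thus reduces to producing undistorted copies of $\mathbb{Z}^n$.

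To build these copies I would use the self-similar structure $\varphi\colon G_d\to G_d\wr d$ of Definition \ref{mother group} to spread commuting elements over disjoint subtrees. First fix a single element $t\in G_d$ of infinite order; such an element exists because $G_2=\mathbb{Z}_2\ast\mathbb{Z}_2$ embeds in $G_d$ by Lemma \ref{mother subgroup}, and the product of its two order-two generators has infinite order. Next, for a level $k$ and a vertex $v\in\mathrm{X}^k$, I want the ``section'' $t_v\in G_d$ acting as $t$ on the subtree $v\mathrm{X}^*$ and trivially elsewhere. Since $d\geqslant 3$, the $k$-th level has $d^k$ vertices; the sections $t_v$ for distinct $v\in\mathrm{X}^k$ have pairwise disjoint supports, hence commute, and together they generate $\bigoplus_{v\in\mathrm{X}^k}\langle t_v\rangle\cong\mathbb{Z}^{d^k}$. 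Letting $k\to\infty$ produces copies of $\mathbb{Z}^n$ for arbitrarily large $n=d^k$ (and, in the limit, an embedded $\bigoplus_{\mathbb{N}}\mathbb{Z}$).

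Two points require genuine work, and I expect them to be the main obstacles. The first is showing that the sections $t_v$ actually lie in $G_d$, and this is exactly where the hypothesis $d>2$ must enter: for $d=2$ no such construction can exist, since $G_2=\mathbb{Z}_2\ast\mathbb{Z}_2$ is virtually cyclic and contains no $\mathbb{Z}^2$, so its level-one stabilizer cannot contain $G_2\times\{1\}$. For $d\geqslant 3$ the extra subtrees and the $d-1\geqslant 2$ free finitary slots in $B_d=S_d^{\,d-1}\rtimes S_{d-1}$ provide enough room to realize commuting infinite-order elements inside $G_d$; making this precise (by conjugating a directed generator by rooted permutations to redirect its recursion, then checking membership via the recursion in Definition \ref{mother group} and the structure Theorem \ref{struction of bounded automata}) is the crucial step. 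The second obstacle is the undistortedness of $H_n=\langle t_v:v\in\mathrm{X}^k\rangle$: the upper bound $l_{G_d}\big(\prod_v t_v^{m_v}\big)\lesssim\sum_v|m_v|$ is immediate from subadditivity of word length, but the matching lower bound is the technical heart. Here I would read off the behaviour of $\prod_v t_v^{m_v}$ along the rays supporting the individual subtrees: because the supports are disjoint and $t$ is a genuine infinite-order bounded automatic automorphism, its restriction to $v\mathrm{X}^*$ recovers $t^{m_v}$, and a uniform length estimate for $G_d$ built on Theorem \ref{struction of bounded automata} should force $l_{G_d}\big(\prod_v t_v^{m_v}\big)\gtrsim\sum_v|m_v|$ (a proper lower bound already suffices to keep $\mathrm{asdim}(H_n)=d^k$).

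Finally, I would record an alternative route that replaces the explicit construction by a self-reproduction argument: one shows $G_3$ is coarsely equivalent to $G_3\times G_3\times G_3$, iterates to get $G_3\sim G_3^{\,3^k}$ for every $k$, and observes that $G_3^{\,3^k}$ contains a copy of $\mathbb{Z}^{3^k}$ (coming from an infinite-order element of $G_2\leqslant G_3$) which one checks is undistorted; then Proposition \ref{subspace for asdim} gives $\mathrm{asdim}(G_3^{\,3^k})\geqslant 3^k$, and Proposition \ref{coarse invariance of asdim} transports this to $\mathrm{asdim}(G_3)\geqslant 3^k$ for all $k$, with the cases $d>3$ following from Lemma \ref{mother subgroup}. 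In this approach the difficulty migrates to establishing the coarse self-equivalence $G_3\sim G_3^3$ together with the undistortedness of the free abelian subgroups, so the same analytic estimate reappears in a different guise; for that reason I would favour the direct construction above as the primary line of attack.
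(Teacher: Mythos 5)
Your two routes are exactly the paper's two proofs in outline --- the direct construction of commuting sections of an infinite-order element over disjoint subtrees (the paper's Theorem \ref{subgroup L}), and the coarse self-equivalence $G_3\sim G_3\times G_3\times G_3$ (the paper's Proposition \ref{commesurable}) --- but in both cases you stop precisely at the step that carries all the content, so as written there is a genuine gap. For the direct construction, it is \emph{not} automatic that the sections $t_v$ of an arbitrary infinite-order $t\in G_d$ lie in $G_d$: $G_d$ is a proper subgroup of $\mathrm{Aut}(\mathrm{X^*})$, and ``conjugating a directed generator by rooted permutations and checking membership via the recursion'' does not by itself produce an element supported on a single subtree. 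The paper gets around this by choosing a very specific element $t=(23)c(23)c$ with $c=\big(1,(23),c\big)$, passing to the normal closure $K=\langle t\rangle^{G_3}$, and proving the branching inclusion $K\times K\times K\leqslant\psi(K)$ (Lemma \ref{subgroup K}) via the explicit identity $t\cdot(\tilde ct^{-1}\tilde c)=(1,1,t)$ with $\tilde c=(12)c(12)$; only then does $t_v\in G_3$ follow by induction on $|v|$. The commensurability route has the same character: Proposition \ref{commesurable} requires exhibiting, for each $\omega\in A_3$, an explicit element of $\mathrm{Stab}(1)$ with $\psi$-image $(\omega,1,1)$ (the system of equations (\ref{EQ_1})--(\ref{EQ 4})) and then showing the normal closure $\mathbb{B}$ of $A_3$ has finite index. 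Neither step is routine, and your proposal defers both.

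On the other hand, the ``second obstacle'' you call the technical heart --- undistortedness of $H_n\cong\mathbb{Z}^n$ --- is a red herring. For a countable group, any two proper left-invariant metrics are coarsely equivalent (this is stated in Section 2.5 of the paper), and the metric induced on a subgroup from the word metric of the ambient finitely generated group is proper and left-invariant. Hence a subgroup isomorphic to $\mathbb{Z}^n$ automatically has asymptotic dimension $n$ in the induced metric, distorted or not, and Proposition \ref{subspace for asdim} gives $\mathrm{asdim}(G_d)\geqslant n$ with no length estimate whatsoever. This is why the paper never proves any lower bound on word length in either of its arguments. So the correct division of labour is the opposite of the one you propose: the metric estimate you flag as the hard part is free, and the algebraic membership statement you hope to dispatch by a general recursion argument is where the real work lies.
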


We only need to prove the case of $d=3$, then the theorem can be implied by Lemma \ref{mother subgroup}. We prove the above theorem in two different ways. First let's recall the commeasurability of two groups.

\begin{defn}\label{def of commeasurable}
Two groups $G$ and $H$ are called \emph{commeasurable}, denoted by $G\approx H$, if they contain isomorphic subgroups of finite index:
$$G'\subset G,H'\subset H, G'\simeq H', \mbox{\,and\,} [G:G'],[H:H']<\infty.$$
\end{defn}

\begin{prop}\label{commesurable}
The mother group $G_3$ and $G_3\times G_3\times G_3$ are commeasurable: $G_3 \approx G_3\times G_3\times G_3$.
\end{prop}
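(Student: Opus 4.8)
The plan is to realise $G_3$ as a \emph{regular branch group} and to exploit the restriction map into three copies of itself. Write $\mathrm{St}(1)\subseteq G_3$ for the pointwise stabiliser of the first level $\{1,2,3\}$. Since $S_3\subseteq G_3$ acts as the full symmetric group on the first level, $\mathrm{St}(1)$ is the kernel of the level-$1$ action $G_3\to S_3$, hence a normal subgroup of index $6$. Restricting $\psi$ from Proposition \ref{decomposition of automorphisms} gives an injective homomorphism $\Phi:\mathrm{St}(1)\to G_3\times G_3\times G_3$, $g\mapsto (g|_1,g|_2,g|_3)$, injectivity being just faithfulness of the action on the tree. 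Thus $\mathrm{St}(1)\cong\Phi(\mathrm{St}(1))$, and the whole proposition reduces to showing that $\Phi(\mathrm{St}(1))$ has finite index in $G_3\times G_3\times G_3$: then $\mathrm{St}(1)\subseteq G_3$ and $\Phi(\mathrm{St}(1))\subseteq G_3\times G_3\times G_3$ are isomorphic finite-index subgroups, which is exactly commeasurability in the sense of Definition \ref{def of commeasurable}.

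To control the image I would first prove that $G_3$ is \emph{self-replicating}, i.e. that $g\mapsto g|_i$ maps $\mathrm{St}(1)$ onto $G_3$ for each $i$; since this map is a homomorphism on $\mathrm{St}(1)$, it suffices to lift each generator. A finitary generator $c\in S_3$ is lifted by the base element $(c,1)\in B_3\subseteq G_3$, which maps to $(c,1,\ast)$ and fixes the first level. A base generator $b=(b_1,b_2)$ of $B_3$ is lifted by the conjugate $(13)\,b\,(13)$, where the transposition cancels the top permutation precisely because $b$ lies in the base, leaving $(b,b_2,b_1)\in\mathrm{St}(1)$ with first coordinate $b$. The delicate generator is the top transposition $s\in S_2\subseteq B_3$, whose image is $(1,1,s)(12)$; here the short product $(13)\,s\,(13)(23)$ is both in $\mathrm{St}(1)$ and equal to $(s,1,1)$, which simultaneously lifts $s$ and exhibits a nontrivial element of the rigid stabiliser $\mathrm{rist}(1)=\{g\in\mathrm{St}(1):g|_2=g|_3=1\}$.

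The heart of the argument is to produce a finite-index subgroup $K\subseteq G_3$ with $K\times K\times K\subseteq\Phi(\mathrm{St}(1))$; this is the regular branch property, from which the finite-index claim is immediate. Conjugating $(s,1,1)$ by elements of $\mathrm{St}(1)$ and using self-replication to make the first coordinate arbitrary already gives $\mathrm{rist}(1)|_1\supseteq\langle\langle s\rangle\rangle$, the normal closure of $s$ in $G_3$, and symmetrically for the other two coordinates. To enlarge this I would use commutators of $s$ with base elements $\beta=(c_1,c_2)\in B_3$: one computes $[s,\beta]=(u,u^{-1},\,\mathrm{base}(u,u^{-1}))$ with $u=c_2c_1^{-1}$ ranging over all of $S_3$, and after multiplying suitable such commutators and permuting coordinates by $S_3$-conjugation one feeds finitary and base elements into each $\mathrm{rist}(i)|_i$. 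Carrying out this bookkeeping for all generators is what pins down $K$.

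I expect this last step to be the main obstacle: verifying that the subgroup generated by all the rigid-stabiliser sections is of finite index, equivalently that $G_3/K$ is finite. The danger is the coupling of coordinates already visible above, where $[s,\beta]$ has the form $(u,u^{-1},\mathrm{base}(u,u^{-1}))$ and naive products cancel the third coordinate against the first two, so the commutators and conjugates must be chosen to decouple the three factors rather than relate them. The cleanest route is to identify $K$ explicitly (a congruence-type or derived subgroup), check $K\times K\times K\subseteq\psi(K)$ directly on generators, and verify $[G_3:K]<\infty$; once the regular branch property is in place, $K\times K\times K$ is a finite-index subgroup of both $G_3$ (via $\Phi$) and of $G_3\times G_3\times G_3$, and the proposition follows.
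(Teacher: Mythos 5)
Your overall strategy is the same as the paper's: reduce commeasurability to producing a finite-index subgroup $K\leqslant G_3$ with $K\times K\times K\subseteq\psi(\mathbb{H})$, where $\mathbb{H}=\mathrm{Stab}(1)$ is the index-$6$ level-one stabiliser. Your preliminary steps (injectivity of $\psi|_{\mathbb{H}}$, surjectivity of each $\mathrm{pr}_i\circ\psi|_{\mathbb{H}}$, and the identity $(13)\,s\,(13)(23)=(s,1,1)$ for the top transposition $s\in S_2\leqslant B_3$) are all correct, and the last of these is actually a cleaner way of exhibiting a nontrivial rigid-stabiliser element than anything in the paper. But the proof is not complete: you never name $K$, and you yourself flag the decisive step --- checking that the subgroup you would generate has finite index --- as an open obstacle. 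That step is the real content of the proposition. Your implicit candidate, the normal closure $\langle\langle s\rangle\rangle$ obtained by conjugating $(s,1,1)$ inside $\mathbb{H}$, is not obviously of finite index: the quotient $G_3/\langle\langle s\rangle\rangle$ is generated by the images of $S_3$ and of the base $S_3\times S_3\leqslant B_3$, and since $S_3$ moves the directed coordinate of a base element to arbitrary positions, the subgroup these generate in $G_3$ is not finite, so finiteness of the quotient would require a genuinely new argument. Your proposed enlargement via the commutators $[s,\beta]=(u,u^{-1},\mathrm{base}(u,u^{-1}))$ produces exactly the coordinate-coupled elements you identify as the danger, and you do not show how to decouple them into elements supported on a single subtree.

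The paper closes this gap with a different choice: $K=\mathbb{B}=\langle A_3\rangle^{G_3}$, the normal closure of the alternating subgroup $A_3\leqslant S_3$. Finite index is then cheap, because $G_3$ is generated by $A_3\cup\{(12)\}\cup B_3$ and the subgroup $\langle (12),B_3\rangle$ is finite (all its elements have the form $(\sigma_1,\sigma_2,g)\tau$ with $\sigma_1,\sigma_2\in S_3$, $g\in B_3$, $\tau\in\{1,(12)\}$, since these top permutations never move the third position). The inclusion $\mathbb{B}\times 1\times 1\subseteq\psi(\mathbb{H})$ is obtained from an explicit ansatz, a product of four base elements interleaved with $(12)$'s, whose image equals $(\omega,1,1)$ precisely when a system of four equations in $S_3$ holds; that system reduces to writing $\sigma_1^{-1}\omega\sigma_1$ as a product of a conjugate of $a$ with a conjugate of $a^{-1}$, which is solvable exactly because such a product always has even sign. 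This sign obstruction is the structural reason the paper works with $A_3$ rather than all of $S_3$, and it is precisely the kind of constraint your ``bookkeeping'' step would run into. To complete your route you would either have to prove $\langle\langle s\rangle\rangle\supseteq\mathbb{B}$ (or otherwise establish $[G_3:\langle\langle s\rangle\rangle]<\infty$), or simply adopt the paper's $\mathbb{B}$ together with its finite-index argument; the $(s,1,1)$ computation alone does not suffice.
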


\begin{proof}
Let $\mathbb{H}=\mathrm{Stab}(1)=\{g\in G_3 ~|~ g(v)=v \mbox{~for all~} v \mbox{~in level~}1\}$, i.e. $\mathbb{H}$ is the subgroup in $G_3$ such that every element acts trivially on the first level of the 3 rooted regular tree $T_3$. Let $\psi : G_3\rightarrow G_3\wr 3$ be the self similar structure described as above, and $\mathrm{pr}_i:G_3\times G_3\times G_3\rightarrow G_3$ be the projection onto the $i$ th position, where $i=1,2,3$. It is straightforward to check that for any $i=1,2,3$, $\mathrm{pr}_i\circ \psi (\mathbb{H})=G_3$.

Let $A_3$ be the group of all permutations of $3$ elements with even signs, i.e. $A_3=\{1,(123),(132)\}$. Let $\mathbb{B}=\langle A_3\rangle^{G_3}\lhd G_3$ be the normalizer of $A_3$ in $G_3$. We recall here $G_3$ contains $S_3$ as a subgroup, so $G_3$ also contains $A_3$ as a subgroup. Notice $G_3$ can be generated by $A_3\cup \{(12)\}\cup S_3\wr 2$, so the index $[G_3:\mathbb{B}]$ is less than or equal to the cardinality of the subgroup in $G_3$ generated by $\{(12)\}\cup S_3\wr 2$, which is a finite number. In fact, the subgroup generated by $\{(12)\}\cup S_3\wr 2$ is contained in
$$\{(\sigma_1,\sigma_2,g)\tau : \sigma_1,\sigma_2 \in S_3, g \in S_3\wr 2, \mbox{\,and\,} \tau=1 \mbox{\, or \,}(12)\},$$
which is a finite subgroup in $G_3$. So $\mathbb{B}$ has finite index in $G_3$.

Next we show $\psi(\mathbb{H})\supseteq \mathbb{B}\times 1\times 1$, where $1$ is the trivial subgroup.
First, for any $\omega \in A_3$, we want to find an element $g \in \mathbb{H}$ such that $\psi(g)=(\omega,1,1)$.
Assume $g$ has the following form
$$g=(\sigma_1,\sigma_2,h)1 \cdot (12) \cdot (\sigma_1',\sigma_2',h')1 \cdot (12) \cdot (\sigma_1'',\sigma_2'',h'')1 \cdot (12) \cdot (\sigma_1''',\sigma_2''',h''')1 \cdot (12),$$
where $h=(\sigma_1,\sigma_2)1$, $h'=(\sigma_1',\sigma_2')1$, $h''=(\sigma_1'',\sigma_2'')1$, and $h'''=(\sigma_1''',\sigma_2''')1$ are in $S_3\wr 2$. Then
$$g=(\sigma_1 \sigma_2' \sigma_1'' \sigma_2''', \sigma_2 \sigma_1' \sigma_2'' \sigma_1''', hh'h''h''').$$
To satisfy $\psi(g)=(\omega,1,1)$, it suffices to satisfy
$$
  \left\{
    \begin{array}{l}
        \sigma_1 \sigma_2' \sigma_1'' \sigma_2'''  =  \omega , \\
        \sigma_2 \sigma_1' \sigma_2'' \sigma_1'''  =  1, \\
        hh'h''h'''   =  1.
    \end{array}
  \right.
$$
While the last equation $hh'h''h'''=1$ is equivalent to $\sigma_1 \sigma_1' \sigma_1'' \sigma_1'''=1$ and $\sigma_2 \sigma_2' \sigma_2'' \sigma_2'''=1$. So we get the condition
\begin{numcases}{}
  \sigma_1 \sigma_2' \sigma_1'' \sigma_2'''  =  \omega , \label{EQ_1} \\
  \sigma_2 \sigma_1' \sigma_2'' \sigma_1'''  =  1, \label{EQ 2}\\
  \sigma_1 \sigma_1' \sigma_1'' \sigma_1'''  =  1, \label{EQ_3}\\
  \sigma_2 \sigma_2' \sigma_2'' \sigma_2'''  =  1. \label{EQ 4}
\end{numcases}
From (\ref{EQ_3}) and (\ref{EQ 4}), we have
\begin{numcases}{}
  \sigma_1'''=\sigma_1''^{-1}\sigma_1'^{-1}\sigma_1^{-1} , \label{EQ_5} \\
  \sigma_2'''=\sigma_2''^{-1}\sigma_2'^{-1}\sigma_2^{-1} . \label{EQ 6}
\end{numcases}
Combining them with (\ref{EQ_1}) and (\ref{EQ 2}), we have
\begin{eqnarray*}
   \sigma_1^{} \sigma_2' \sigma_1'' \sigma_2''^{-1} \sigma_2'^{-1} \sigma_1' \sigma_2'' \sigma_1''^{-1} \sigma_1'^{-1} \sigma_1^{-1}=\omega.
\end{eqnarray*}
Equivalently,
\begin{eqnarray*}
   \sigma_2' (\sigma_1'' \sigma_2''^{-1}) \sigma_2'^{-1} \cdot \sigma_1' (\sigma_1'' \sigma_2''^{-1})^{-1} \sigma_1'^{-1}= \sigma_1^{-1} \omega  \sigma_1^{}.
\end{eqnarray*}
Let $a=\sigma_1'' \sigma_2''^{-1}$, then
\begin{eqnarray}\label{EQ_7}
   (\sigma_2' a \sigma_2'^{-1}) \cdot (\sigma_1' a^{-1} \sigma_1'^{-1})= \sigma_1^{-1} \omega  \sigma_1^{}.
\end{eqnarray}
Notice for $\omega=(123)$ or $\omega=(132)$, we can solve the above formula by
\begin{eqnarray*}
(12)(12)(12)\cdot (23)(12)(23)=(123),\\
(12)(12)(12)\cdot (13)(12)(13)=(132).
\end{eqnarray*}
So for any $\omega \in A_3$, Equation (\ref{EQ_7}) always has a solution. In other words, for any $\omega \in A_3$, there exists some $g \in \mathbb{H}$, such that $\psi(g)=(\omega, 1,1)$.

Because $\mathrm{pr}_1\circ \psi$ is surjective, for any $x \in G_3$, there exists $h \in \mathbb{H}$ such that $\mathrm{pr}_1\circ \psi(h)=x$, then
$$
\psi(h^{-1}gh)=(x,y,z)(\omega,1,1)(x^{-1}, y^{-1}, z^{-1})=(x\omega x^{-1},1,1).
$$
So $\psi(\mathbb{H})\supseteq \mathbb{B}\times 1\times 1$.

Similarly, $\psi(\mathbb{H})\supseteq  1\times\mathbb{B}\times 1$ and $\psi(\mathbb{H})\supseteq 1\times 1\times\mathbb{B}$. So $\psi(\mathbb{H})\supseteq \mathbb{B}\times\mathbb{B}\times\mathbb{B}$. Because $[G_3:\mathbb{B}]$ is finite, we have $[G_3\times G_3 \times G_3:\mathbb{B}\times\mathbb{B}\times\mathbb{B}]$ is also finite. So $[G_3^3:\psi(\mathbb{H})]$ is finite, which implies $G_3$ is commeasurable.

\end{proof}

\noindent {\bf Proof of Theorem \ref{asymptotic dimension of the mother groups}.}
We only need to prove $\mathrm{asdim}(G_3)=\infty$. From lemma \ref{mother subgroup}, $G_3$ contains a subgroup $G_2$, which is isomorphic to $\mathbb{Z}_2\ast \mathbb{Z}_2$. Because $G_2$ contains a subgroup which is isomorphic to the integer group $\mathbb{Z}$, so $\mathbb{Z}$ can be coarsely embedded into $G_3$. From Proposition \ref{commesurable}, we know that $G_3$ is coarsely equivalent to $G_3\times G_3\times G_3$, so $\mathbb{Z}^3$ can be coarsely embedded into $G_3$. Inductively, $\mathbb{Z}^{3n}$ can be coarsely embedded into $G_3$ for any integer $n>0$. Because $\mathrm{asdim}(\mathbb{Z}^n)=n$ and the fact that the asymptotic dimension of a space is not less than the asymptotic dimension of its subspace, we get $\mathrm{asdim}(G_3)=\infty$.
\begin{flushright}
$\Box$
\end{flushright}

\begin{rem}
In fact, every finitely generated infinite group contains an isometric copy of the integer group $\mathbb{Z}$, see for example an exercise in \cite{Har00}.
\end{rem}

\subsection{Another proof of Theorem \ref{asymptotic dimension of the mother groups}}
In this section we introduce a new method to prove Theorem 3.4. Actually, we prove that there is a subgroup in $G_3$ which is isomorphic to the direct sum of infinitely many copies of the integer number $\mathbb{Z}$.

Let $c\in G_3$ defined recursively by $c=\big(1,(23),c\big)$. In the graph representation version,
$$
c=
\begin{array}{c}
          \begin{tikzpicture}
            \tikzstyle{l} = [];
            \tikzstyle{r} = [draw,,circle]
                \node[r] {1} [grow'=down]
                child {
                    node[l] (t1) {$c$}
                }
                child {
                    node[l] (t2) {$(23)$}
                }
                child {
                    node[l] (t3) {1}
                };
           \end{tikzpicture}
\end{array},
$$
Here 1 means the identity map. Let $t=(23)c(23)c=(1,c(23),(23)c)$, and $K=\langle t \rangle^{G_3}$ be the normalizer of $t$ in $G_3$. In other words, $K$ is the smallest normal subgroup in $G_3$ containing $t$. Let $\mathbb{H}=\mathrm{Stab}(1)=\{g\in G_3 ~|~ g(v)=v \mbox{~for all~} v \mbox{~in level~}1\}$, i.e. $\mathbb{H}$ is the subgroup in $G_3$ such that every element acts trivially on the first level of the 3 rooted regular tree $T_3$. Let $\psi : G_3\rightarrow G_3\wr 3$ be the self similar structure described in the previous subsection. Then $K$ is a normal subgroup in $\mathbb{H}$. We have the following lemma.

\begin{lem}\label{subgroup K}
Let $K$ and $\psi$ be as above, then $K \times K \times K \leqslant \psi(K)$.
\end{lem}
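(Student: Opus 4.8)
The plan is to reduce the statement to producing \emph{one} nontrivial single-coordinate element of $\psi(K)$ and then to spread it over all of $K$ by normality. Throughout, write elements of $\psi(\mathbb{H})\subseteq G_3\times G_3\times G_3$ as triples, the root permutation being trivial on $\mathbb{H}$, and for $i=1,2,3$ set
$$K_i=\{g\in G_3 : \text{the triple with } g \text{ in the } i\text{th slot and } 1 \text{ in the others lies in } \psi(K)\}.$$
Each $K_i$ is a subgroup, and I claim it is normal in $G_3$: given $g\in G_3$, the proof of Proposition~\ref{commesurable} provides $\gamma\in\mathbb{H}$ with $\mathrm{pr}_i\circ\psi(\gamma)=g$, and conjugating a witnessing element of $K$ by $\gamma$ (legitimate since $K\lhd G_3$) multiplies the $i$th slot by $g$ on both sides while leaving the other slots trivial, so $gK_ig^{-1}\subseteq K_i$. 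Since $\psi(K)$ is a subgroup of $G_3^3$, the desired inclusion $K\times K\times K\leqslant\psi(K)$ is equivalent to $K\subseteq K_1\cap K_2\cap K_3$; and because $K=\langle t\rangle^{G_3}$ with each $K_i$ normal, this reduces to showing $t\in K_i$ for each $i$, i.e. that one of $(t,1,1),(1,t,1),(1,1,t)$ lies in $\psi(K)$. Finally conjugation by $S_3\subseteq G_3$ permutes the three slots and preserves $\psi(K)$ (again because $K$ is normal), so it suffices to establish a single one of these.

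The heart of the matter is this single localization, and the key is the explicit value of $\psi(t)$ together with the fact that $c$ is an involution. From $c=(1,(23),c)$ one checks $c^2=1$; hence, writing $v:=(23)c$, the other section satisfies $c(23)=v^{-1}$, so the formula $t=(1,c(23),(23)c)$ recorded in the text becomes
$$\psi(t)=(1,\,v^{-1},\,v),\qquad t=v^2 .$$
Crucially, both $(23)$ and $c$ invert $v$ by conjugation: $(23)v(23)=v^{-1}$ and $cvc=v^{-1}$ (using $c^2=1$). I would not conjugate $t$ itself by $c$ — that flips both nontrivial slots at once and isolates nothing — but rather an $S_3$-conjugate of $t$ having a trivial slot. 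For instance $\psi\big((13)t(13)\big)=(v,\,v^{-1},\,1)$; conjugation by $c$ acts on the three slots by $1,(23),c$ respectively, hence fixes the first slot, sends the second $v^{-1}$ to $v$, and leaves the trivial third slot alone, giving $(v,v,1)\in\psi(K)$. Multiplying by $\psi\big((13)t(13)\big)^{-1}=(v^{-1},v,1)$ (the slot entries being powers of $v$, which commute) yields
$$\big[(13)t(13)\big]^{-1}\,c\,(13)t(13)\,c^{-1}\in K,\qquad \psi\big(\,\cdots\,\big)=(1,\,v^2,\,1)=(1,\,t,\,1).$$
Thus $t\in K_2$, and by the reduction above $K\times K\times K\leqslant\psi(K)$.

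The step I expect to be delicate — and the reason a naive attempt stalls — is exactly this isolation of a single factor. Multiplying together the six $S_3$-conjugates of $t$ only ever produces triples whose slot entries are powers of $v$ with exponents summing to zero (these conjugates realize the permutations of $(0,-1,1)$), so no product of them can place $t=v^2$ in one slot with the others trivial. The new ingredient that breaks this constraint is conjugation by $c$: because $c$ inverts $v$ slotwise it alters the exponent sum, and pairing it with an $S_3$-conjugate that already carries a trivial slot cleanly isolates one coordinate. Establishing the identities $c^2=1$, $c(23)=v^{-1}$ and $cvc=v^{-1}$ is what makes the argument go; once they are in hand the computation is short. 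The only external input is the surjectivity of $\mathrm{pr}_i\circ\psi|_{\mathbb{H}}$ from the proof of Proposition~\ref{commesurable}, used in the spreading step.
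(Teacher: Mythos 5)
Your proposal is correct and follows essentially the same route as the paper: both isolate a single slot by combining a conjugate of $t$ with a $c$-type conjugate of $t^{-1}$ (the paper uses $t\cdot\tilde{c}t^{-1}\tilde{c}$ with $\tilde{c}=(12)c(12)$, you use $[(13)t(13)]^{-1}\cdot c\,(13)t(13)\,c^{-1}$, exploiting the same identities $c^2=1$ and $cvc=v^{-1}$ for $v=(23)c$), and then spread the result over $1\times1\times K$ via surjectivity of $\mathrm{pr}_i\circ\psi|_{\mathbb{H}}$ and normality of $K$, finishing with $S_3$-conjugation to permute the slots. The packaging via the normal subgroups $K_i$ is a minor cosmetic difference only.
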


\begin{proof}
First, it's straightforward to check that for any $i=1,2,3$, we have $\mathrm{pr}_i\circ \psi (\mathbb{H})=G_3$, where $\mathrm{pr}_i:G_3 \times G_3 \times G_3 \rightarrow G_3$ is the projection onto the $i$th position and $\mathbb{H}=\mathrm{Stab}(1)$.

For $t=(23)c(23)c$, take $\tilde{c}=(12)c(12)=\big( (23),1,c \big)$. Then $c\tilde{c}=\tilde{c}c$,
and $t \cdot (\tilde{c}t^{-1}\tilde{c})=(23)c(23)c \cdot \tilde{c} \cdot c(23)c(23) \cdot \tilde{c}=(23)c(23)\tilde{c}cc(23)c(23)\tilde{c}=\big[ (23)c(23)\tilde{c} \big]^2$, here we use the fact that $c$ and $\tilde{c}$ are commutative in the second equation.
Since $(23)c(23)\tilde{c}=\big( 1,c,(23) \big) \cdot \big( (23),1,c \big) =\big( (23),c,(23)c \big)$, so $t \cdot (\tilde{c}t^{-1}\tilde{c})=\big[ (23)c(23)\tilde{c} \big]^2=\big( 1,1,(23)c(23)c \big)=\big( 1,1,t \big)$.

For any $g \in G_3$, since $\mathrm{pr}_i\circ \psi (\mathbb{H})=G_3$, there exists $h\in \mathbb{H}$ such that $\psi(h)=(h_1,h_2,g)$ for some $h_1,h_2 \in G_3$. Then $\psi(h \cdot t\tilde{c}t^{-1}\tilde{c} \cdot h^{-1})=(1,1,gtg^{-1})$. So $\psi(K)\geqslant 1 \times 1 \times K$.
Notice that $(23)(1 \times 1 \times K)(23)=1 \times K \times 1$ and $(13)(1 \times 1 \times K)(13)=K \times 1 \times 1$, hence $\psi(K) \geqslant K \times K \times K$.
\end{proof}

Before we give the second proof of Theorem 3.4, we define a sequence of elements in $G_3$.

\begin{defn}
For each vertex $v$ of the 3 rooted regular tree $T_3$, define an element $t_v$ in $G_3$ inductively on the level of $v$ as follows.
\begin{enumerate}[(1)]
  \item $t_{\emptyset}=t=(23)c(23)c$,
  \item Suppose for any vertex $v$ with $|v| \leqslant n-1$, we have defined an element $t_v \in G_3$, where $|v|$ means the level of $v$, then
    \begin{itemize}
      \item If $w=1v$, define $t_w=(t_v,1,1)$;
      \item If $w=2v$, define $t_w=(1,t_v,1)$;
      \item If $w=3v$, define $t_w=(1,1,t_v)$.
    \end{itemize}
\end{enumerate}
\end{defn}

We draw the graph representations of the first few elements defined above.\\[2pt]

\resizebox{!}{1.6cm}{
\begin{tabular}{|c|c|c|}
  \hline
  $t_1$ & $t_2$ & $t_3$\\
  \hline
   & & \\[-0.2cm]
      \begin{tikzpicture}
            \tikzstyle{l} = [];
            \tikzstyle{r} = [draw,circle]
                \node[r] {1} [grow'=down]
                child {
                    node[l] (t1) {$1$}
                }
                child {
                    node[l] (t2) {$1$}
                }
                child {
                    node[l] (t3) {$t$}
                };
      \end{tikzpicture}
    & \begin{tikzpicture}
            \tikzstyle{l} = [];
            \tikzstyle{r} = [draw,circle]
                \node[r] {1} [grow'=down]
                child {
                    node[l] (t1) {$1$}
                }
                child {
                    node[l] (t2) {$t$}
                }
                child {
                    node[l] (t3) {$1$}
                };
       \end{tikzpicture}
    & \begin{tikzpicture}
            \tikzstyle{l} = [];
            \tikzstyle{r} = [draw,circle]
                \node[r] {1} [grow'=down]
                child {
                    node[l] (t1) {$t$}
                }
                child {
                    node[l] (t2) {$1$}
                }
                child {
                    node[l] (t3) {$1$}
                };
           \end{tikzpicture}\\
  \hline
  \end{tabular}
}

{\centering
\resizebox{!}{1.85cm}{
\begin{tabular}{|C{5cm}|C{5cm}|C{5cm}|}
  \hline
  $t_{11}$ & $t_{12}$ & $t_{13}$\\
  \hline
   & & \\[-0.2cm]
      \begin{tikzpicture}
            \tikzstyle{l} = [];
            \tikzstyle{r} = [draw,circle]
                \node[r] {1} [grow'=down]
                child {
                    node[l] (t3) {$1$}
                }
                child {
                    node[l] (t2) {$1$}
                }
                child {
                    node[r] (t1) {$1$}
                    child {
                       node[l] (t13) {$1$}
                    }
                    child {
                       node[l] (t12) {$1$}
                    }
                    child {
                       node[l] (t11) {$t$}
                    }
                };
      \end{tikzpicture}
    & \begin{tikzpicture}
            \tikzstyle{l} = [];
            \tikzstyle{r} = [draw,circle]
                \node[r] {1} [grow'=down]
                child {
                    node[l] (t3) {$1$}
                }
                child {
                    node[l] (t2) {$1$}
                }
                child {
                    node[r] (t1) {$1$}
                    child {
                       node[l] (t13) {$1$}
                    }
                    child {
                       node[l] (t12) {$t$}
                    }
                    child {
                       node[l] (t11) {$1$}
                    }
                };
      \end{tikzpicture}
    & \begin{tikzpicture}
            \tikzstyle{l} = [];
            \tikzstyle{r} = [draw,circle]
                \node[r] {1} [grow'=down]
                child {
                    node[l] (t3) {$1$}
                }
                child {
                    node[l] (t2) {$1$}
                }
                child {
                    node[r] (t1) {$1$}
                    child {
                       node[l] (t13) {$t$}
                    }
                    child {
                       node[l] (t12) {$1$}
                    }
                    child {
                       node[l] (t11) {$1$}
                    }
                };
      \end{tikzpicture}\\
  \hline
\end{tabular}
}
}

{\centering
\resizebox{!}{1.85cm}{
\begin{tabular}{|C{5cm}|C{5cm}|C{5cm}|}
  \hline
  $t_{21}$ & $t_{22}$ & $t_{23}$\\
  \hline
   & & \\[-0.2cm]
      \begin{tikzpicture}
            \tikzstyle{l} = [];
            \tikzstyle{r} = [draw,circle]
                \node[r] {1} [grow'=down]
                child {
                    node[l] (t3) {$1$}
                }
                child {
                    node[r] (t2) {$1$}
                    child {
                       node[l] (t23) {$1$}
                    }
                    child {
                       node[l] (t22) {$1$}
                    }
                    child {
                       node[l] (t21) {$t$}
                    }
                }
                child {
                    node[l] (t1) {$1$}
                };
      \end{tikzpicture}
    & \begin{tikzpicture}
            \tikzstyle{l} = [];
            \tikzstyle{r} = [draw,circle]
                \node[r] {1} [grow'=down]
                child {
                    node[l] (t3) {$1$}
                }
                child {
                    node[r] (t2) {$1$}
                    child {
                       node[l] (t23) {$1$}
                    }
                    child {
                       node[l] (t22) {$t$}
                    }
                    child {
                       node[l] (t21) {$1$}
                    }
                }
                child {
                    node[l] (t1) {$1$}
                };
      \end{tikzpicture}
    & \begin{tikzpicture}
            \tikzstyle{l} = [];
            \tikzstyle{r} = [draw,circle]
                \node[r] {1} [grow'=down]
                child {
                    node[l] (t3) {$1$}
                }
                child {
                    node[r] (t2) {$1$}
                    child {
                       node[l] (t23) {$t$}
                    }
                    child {
                       node[l] (t22) {$1$}
                    }
                    child {
                       node[l] (t21) {$1$}
                    }
                }
                child {
                    node[l] (t1) {$1$}
                };
      \end{tikzpicture}\\
  \hline
\end{tabular}
}
}

{\centering
\resizebox{!}{1.85cm}{
\begin{tabular}{|C{5cm}|C{5cm}|C{5cm}|}
  \hline
  $t_{31}$ & $t_{32}$ & $t_{33}$\\
  \hline
   & & \\[-0.2cm]
      \begin{tikzpicture}
            \tikzstyle{l} = [];
            \tikzstyle{r} = [draw,circle]
                \node[r] {1} [grow'=down]
                child {
                    node[r] (t3) {$1$}
                    child {
                       node[l] (t33) {$1$}
                    }
                    child {
                       node[l] (t32) {$1$}
                    }
                    child {
                       node[l] (t31) {$t$}
                    }
                }
                child {
                    node[l] (t2) {$1$}
                }
                child {
                    node[l] (t1) {$1$}
                };
      \end{tikzpicture}
    & \begin{tikzpicture}
            \tikzstyle{l} = [];
            \tikzstyle{r} = [draw,circle]
                \node[r] {1} [grow'=down]
                child {
                    node[r] (t3) {$1$}
                    child {
                       node[l] (t33) {$1$}
                    }
                    child {
                       node[l] (t32) {$t$}
                    }
                    child {
                       node[l] (t31) {$1$}
                    }
                }
                child {
                    node[l] (t2) {$1$}
                }
                child {
                    node[l] (t1) {$1$}
                };
      \end{tikzpicture}
    & \begin{tikzpicture}
            \tikzstyle{l} = [];
            \tikzstyle{r} = [draw,circle]
                \node[r] {1} [grow'=down]
                child {
                    node[r] (t3) {$1$}
                    child {
                       node[l] (t33) {$t$}
                    }
                    child {
                       node[l] (t32) {$1$}
                    }
                    child {
                       node[l] (t31) {$1$}
                    }
                }
                child {
                    node[l] (t2) {$1$}
                }
                child {
                    node[l] (t1) {$1$}
                };
      \end{tikzpicture}\\
  \hline
\end{tabular}
}
}

\begin{thm}\label{subgroup L}
There exists a subgroup $L$ in $G_3$ such that $L$ is isomorphic to the direct sum of infinitely many copies of integer $\mathbb{Z}$, i.e. $L \cong \bigoplus\limits_{\infty} \mathbb{Z}$.
\end{thm}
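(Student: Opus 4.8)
The plan is to exhibit $L$ as the subgroup generated by a carefully chosen infinite subfamily of the $t_v$, selected so that their supports on $T_3$ are pairwise disjoint; disjointness of supports will then deliver commutativity and linear independence simultaneously. Concretely I would take
$L=\langle\, t_{1^n} : n\geqslant 0\,\rangle$, where $1^n$ is the vertex consisting of $n$ consecutive $1$'s, so that $t_{1^0}=t_\emptyset=t$. First I would record that these generators really lie in $G_3$: by Lemma \ref{subgroup K} we have $K\times K\times K\leqslant\psi(K)$, and since $t\in K$, the relation $t_{1v}=\psi^{-1}(t_v,1,1)$ together with $K\times 1\times 1\subseteq\psi(K)$ and the injectivity of $\psi$ shows inductively that every $t_{1^n}$ lies in $K\leqslant G_3$.

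Next I would prove that $t$, hence each $t_{1^n}$, has infinite order. Put $s=(23)c$, so that $s^2=(23)c(23)c=t=(1,c(23),(23)c)$; in particular $t$ fixes the first level and $t|_3=(23)c=s$, while $\psi(s)=\big((1,c,(23)),(23)\big)$ has root permutation the transposition $(23)$. Suppose $s^n=1$ with $n$ minimal; the root permutation of $s^n$ is $(23)^n$, forcing $n=2m$, whence $1=s^{2m}=t^m$, and since $t$ fixes the first level, $1=t^m|_3=(t|_3)^m=s^m$ with $0<m<n$, contradicting minimality. Thus $s$, and therefore $t=s^2$, has infinite order; as $t_{1^n}$ acts as $t$ on the subtree rooted at $1^n$, it too has infinite order, so each $\langle t_{1^n}\rangle\cong\mathbb{Z}$.

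The decisive structural fact is $t|_1=1$, read off from $t=(1,c(23),(23)c)$. Consequently $t_{1^n}$ is trivial off the subtree rooted at $1^n$ and, inside it, acts as $t$; since $t$ fixes its own first subtree, $t_{1^n}$ fixes the whole subtree rooted at $1^{n+1}$ and moves only vertices lying below $1^n2$ or $1^n3$. Hence for $m>n$ the element $t_{1^m}$ is supported inside the subtree rooted at $1^m\subseteq 1^{n+1}$, which $t_{1^n}$ fixes pointwise, so the supports of $t_{1^n}$ and $t_{1^m}$ are disjoint. Disjointness of supports at once gives $t_{1^n}t_{1^m}=t_{1^m}t_{1^n}$, so $L$ is abelian, and it forces independence: if a finite product $\prod_n t_{1^n}^{k_n}$ equals $1$, then restricting to the support of a fixed $t_{1^{n_0}}$ (on which every other factor acts trivially) yields $t_{1^{n_0}}^{k_{n_0}}=1$, so $k_{n_0}=0$. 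Therefore the homomorphism $\bigoplus_{n\geqslant 0}\mathbb{Z}\to L$ sending the $n$-th generator to $t_{1^n}$ is an isomorphism, which proves $L\cong\bigoplus_\infty\mathbb{Z}$.

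The main obstacle is not any individual computation but the \emph{choice} of generators: the naive attempt to use all the $t_v$ does not work, since they do not pairwise commute — already $t$ and $t_{22}$ fail to commute, because the restriction $t|_2=c(23)$ interacts nontrivially with $t_2$. The resolving insight is that a family of transplants with pairwise disjoint supports is simultaneously commuting and free, and that the identity $t|_1=1$ makes the spine $\{1^n\}$ such a family. The only genuinely delicate verification is the infinite order of $t$, where the self-referential relation $t|_3=s$ with $s^2=t$ must be handled by the descent argument above rather than by direct expansion.
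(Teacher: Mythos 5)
Your proof is correct, but it chooses a different generating family from the paper and consequently rests on a different key observation. The paper takes $L=\langle t_v : v=2,3,12,13,112,113,\dots\rangle$, i.e.\ transplants indexed by the pairwise \emph{incomparable} words $1^n2$ and $1^n3$; commutativity is then immediate because $\langle t_v\rangle$ acts trivially outside the subtree $vT_3$ and these subtrees are pairwise disjoint (Lemma \ref{case 1}). You instead use the nested chain $\{t_{1^n}\}_{n\geqslant 0}$, whose indexing vertices all lie on one ray, so disjointness of the subtrees fails and you must substitute the finer fact $t|_1=1$: the actual support of $t_{1^n}$ sits under $1^n2$ and $1^n3$ and misses the subtree rooted at $1^{n+1}$ that contains all later generators. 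That step is sound, and you are right that the choice of family is the real issue --- $t$ and $t_{22}$ indeed fail to commute, by Lemma \ref{case 4}, so not every subfamily of $\{t_v\}$ works. You also replace the paper's infinite-order argument, which identifies $t$ with the image of $abab$ under the embedding $G_2\cong\mathbb{Z}_2\ast\mathbb{Z}_2\hookrightarrow G_3$ of Lemma \ref{mother subgroup}, by a self-contained descent on $s=(23)c$ with $s^2=t$, using the root permutation of $s^n$ and the restriction $t|_3=s$; both are valid, yours avoids invoking the structure of $G_2$ while the paper's is shorter. Finally, you make explicit the directness of the sum by restricting a trivial product to the support of a single generator, a point the paper leaves as ``it's easy to see''; since disjointness of supports is precisely what makes the map from $\bigoplus\mathbb{Z}$ injective, spelling this out is a genuine improvement in completeness. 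Your membership argument $t_{1^n}\in K\leqslant G_3$ via Lemma \ref{subgroup K} coincides with the paper's.
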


\begin{proof}
From lemma \ref{subgroup K}, $\psi(K)\geqslant K \times K \times K$. Now $t_1=(t,1,1)$ and $t\in K$, so $t_1\in K$. Similarly, $t_2,t_3\in K$. Inductively, all $t_v$ belong to $K$ for any finite word $v$.
Let $L$ be the subgroup in $G_3$ generated by the set
$$S=\big\{~t_v ~\big|~v=2,3,12,13,112,113,1112,1113\cdots,1\cdots 12,1\cdots 13,\cdots ~\big\}.$$
For a finite word $v$, the subgroup $\langle t_v \rangle$ acts trivially outside the subtree $vT_3$. So elements in $S$ are commutative with each other.

We claim that every element in $S$ generates a copy of $\mathbb{Z}$ in $G_3$. In fact, we only need to check that the subgroup generated by $t$ is isomorphic to $\mathbb{Z}$ because $\langle t_v\rangle$ is isomorphic to $\langle t \rangle$  for any finite word $v$.
From lemma \ref{mother subgroup}, $t=(23)c(23)c$ is in the image of the canonical embedding $G_2 \hookrightarrow G_3$. Notice that $G_2$ is isomorphic to $\mathbb{Z}_2 \ast \mathbb{Z}_2$ where the first copy is generated by $a=(12)$ while the second is generated by $b=\big((12),b\big)$, and $t$ is just the image of $abab$. So the subgroup generated by $t$ is isomorphic to $\mathbb{Z}$, hence it's easy to see the theorem holds.
\end{proof}

\noindent {\bf Another Proof of Theorem \ref{asymptotic dimension of the mother groups}.}
From Proposition \ref{subspace for asdim} and Theorem \ref{subgroup L}, we see that $asdimG_d\geqslant asdimG_3\geqslant asdim L=\infty$ for any $d>2$.
\begin{flushright}
$\Box$
\end{flushright}

\section{A subgroup in $G_3$ with finite decomposition complexity}
In this section, we analyse the decomposition complexity of the subgroup $T$ in $G_3$ generated by all the elements $t_v$ for any finite word $v$, i.e.
$$T=\langle t_v ~|~ v \in \{1,2,3\}^* \rangle.$$
We prove that $T$ has FDC with respect to any proper length metric.
First we need some commutative relations between elements in $J=\{t_v ~|~v \in \{1,2,3\}^*\}$. We show although they are not commutative, they satisfy certain special relations similar to commutativity. For any two finite words $v,w\in \{1,2,3\}^*$, write $v \nprec w$ if there doesn't exist some finite word $u$ such that $w=vu$. Also recall that $|v|$ denotes the level of $v$, i.e. the number of letters in $v$. For the letter 2 and 3, define $\hat{2}=3$ and $\hat{3}=2$.

\begin{prop}\label{comm relation}
For any two finite words $w_1,w_2\in \{1,2,3\}^*$, we have:
\begin{itemize}
  \item If $w_1 \nprec w_2$ and $w_2 \nprec w_1$, then $t_{w_1}\cdot t_{w_2}=t_{w_2}\cdot t_{w_1}$;
  \item If $w_2=w_1 v$, then:
    \begin{enumerate}[(1)]
      \item If the word $v$ contains 1, then $t_{w_1}\cdot t_{w_2}=t_{w_2}\cdot t_{w_1}$;
      \item Otherwise,
            \begin{enumerate}[(a)]
              \item If $|v|=1$, then $t_{w_1}\cdot t_{w_2}=t_{w_2}\cdot t_{w_1}$;
              \item If $|v|=2$, suppose $v=ab$, then $t_{w_1}\cdot t_{w_1 ab}=t_{w_1 a\hat{b}}^{-1}\cdot t_{w_1}$;
              \item If $|v|\geqslant 3$:
                    \begin{enumerate}[(i)]
                      \item $t_{w_1}\cdot t_{w_1 23a\bar{v}}=t_{w_1 22 \hat{a}\bar{v}}\cdot t_{w_1}$;
                      \item $t_{w_1}\cdot t_{w_1 32a\bar{v}}=t_{w_1 33 \hat{a}\bar{v}}\cdot t_{w_1}$;
                      \item $t_{w_1}\cdot t_{w_1 223\cdots 3}=t_{w_1 23 3\cdots 3}^{-1}\cdot t_{w_1}$;
                      \item $t_{w_1}\cdot t_{w_1 333\cdots 3}=t_{w_1 32 3\cdots 3}^{-1}\cdot t_{w_1}$;
                      \item $t_{w_1}\cdot t_{w_1 223\cdots 32}=t_{w_1 233\cdots 32}^{-1}\cdot t_{w_1}$;
                      \item $t_{w_1}\cdot t_{w_1 333\cdots 32}=t_{w_1 323\cdots 32}^{-1}\cdot t_{w_1}$;
                      \item $t_{w_1}\cdot t_{w_1 223\cdots 32a\bar{v}}=t_{w_1 233\cdots 32\hat{a} \bar{v}}\cdot t_{w_1}$;
                      \item $t_{w_1}\cdot t_{w_1 333\cdots 32a\bar{v}}=t_{w_1 323\cdots 32\hat{a} \bar{v}}\cdot t_{w_1}$,
                    \end{enumerate}
            \end{enumerate}
            where $a,b\in\{1,2,3\}$, $\bar{v}$ is a finite word and $\bar{v}$ can be $\emptyset$.
    \end{enumerate}
\end{itemize}
\end{prop}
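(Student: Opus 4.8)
The plan is to reduce every assertion to the single base word $w_1=\emptyset$ and then run an induction on the length $|v|$ driven by the self-similar (wreath) recursion. First I would record the elementary algebra of the elements involved. From $c=(1,(23),c)$ one gets $c^2=1$; writing $q=(23)c$ and $p=c(23)$ one checks $pq=c^2=1$, so $p=q^{-1}$, while $t=(23)c(23)c=q^2$ has the self-similar form $t=(1,q^{-1},q)$. Since $c$ and $(23)$ are involutions inverting $t$, i.e. $ctc^{-1}=t^{-1}$ and $(23)t(23)=t^{-1}$, these two identities are what will produce all of the sign changes in the statement.

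Next I would justify the reduction to $w_1=\emptyset$. For a single letter $a$ the ``place in branch $a$'' map $\Phi_a$ (namely $\Phi_1(g)=(g,1,1)$, $\Phi_2(g)=(1,g,1)$, $\Phi_3(g)=(1,1,g)$) restricts to an injective homomorphism on the subgroup generated by the $t_u$, and $\Phi_a(t_u)=t_{au}$; composing along $w_1=a_1\cdots a_k$ gives a homomorphism $\Phi_{w_1}$ with $\Phi_{w_1}(t_u)=t_{w_1u}$. Hence every asserted relation $t_{w_1}\cdot t_{w_1v}=t_{w_1(\cdots)}^{\pm1}\cdot t_{w_1}$ is the $\Phi_{w_1}$-image of the corresponding relation with $w_1=\emptyset$, so it suffices to treat $w_1=\emptyset$. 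The first bullet (Case A) is then immediate for general $w_1,w_2$: when neither word is a prefix of the other, their longest common prefix splits them into disjoint subtrees, so $t_{w_1}$ and $t_{w_2}$ have disjoint supports and commute.

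For $w_1=\emptyset$ I would use $t=(1,q^{-1},q)$ to convert each product into a conjugation problem: for $v=2v'$ one has $t\cdot t_{2v'}=(1,q^{-1}t_{v'}q,1)\cdot t$, for $v=3v'$ one has $t\cdot t_{3v'}=(1,1,qt_{v'}q^{-1})\cdot t$, and the case $v=1v'$ is immediate because $t$ is trivial on branch $1$. Thus everything reduces to computing the conjugates $q^{\pm1}t_{u}q^{\mp1}$. A direct wreath computation gives the one-step rules $qt_{1u}q^{-1}=t_{1u}$, $qt_{2u}q^{-1}=(1,1,(23)t_u(23))$, $qt_{3u}q^{-1}=(1,ct_uc^{-1},1)$ (and the mirror rules for $q^{-1}$), together with the auxiliary rules $(23)t_{au}(23)=t_{\hat a u}$ for $a\in\{2,3\}$, $(23)t(23)=t^{-1}$, and the recursive rule $ct_{3u}c^{-1}=(1,1,ct_uc^{-1})$, $ct_{2u}c^{-1}=(1,(23)t_u(23),1)$, $ctc^{-1}=t^{-1}$. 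With these in hand the low-complexity cases, together with the base twist (2b), follow from one or two applications, and the eight identities of (2c) follow by induction on $|v|$.

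The step I expect to be the main obstacle is the inductive unwinding for $|v|\ge 3$. The subtlety is that conjugation by $c$ is itself recursive: it ``eats'' a maximal block of $3$'s while descending the rightmost branch, and only terminates when it meets a $1$ (where it acts trivially) or a $2$ (where it converts into a single, non-recursive $(23)$-conjugation that merely flips the following letter). This is exactly the mechanism that produces the long $3\cdots3$ tails and the placement of $\hat a$ in patterns (i)--(viii), and it also dictates when the exponent $-1$ appears, namely when the descent reaches the bottom copy of $t$ via $ctc^{-1}=t^{-1}$. The bulk of the work is therefore the careful bookkeeping matching the output of this nested recursion against each of the eight prescribed words; once the one-step rules above are in place, each individual case is a routine, if lengthy, verification.
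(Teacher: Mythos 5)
Your proposal is correct and follows essentially the same route as the paper: your reduction to $w_1=\emptyset$ via the branch-embedding homomorphisms $\Phi_{w_1}$ is just the mirror image of the paper's injective restriction map $g\mapsto g|_{w_1}$, and your one-step conjugation rules (driven by $c^2=1$, $t=(1,c(23),(23)c)$, $ctc=t^{-1}$, $(23)t(23)=t^{-1}$, and the recursion of $c$-conjugation along maximal blocks of $3$'s) are exactly the computations the paper carries out case by case in Lemmas \ref{case 1}--\ref{case 8}. The only difference is organizational --- you package the eight subcases of (2c) as one recursive mechanism rather than separate inductions --- so there is no gap to report.
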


We divide the proof into following lemmas.

\begin{lem}\label{case 1}
For any two finite words $w_1,w_2\in \{1,2,3\}^*$, if $w_1 \nprec w_2$ and $w_2 \nprec w_1$, then $t_{w_1}\cdot t_{w_2}=t_{w_2}\cdot t_{w_1}$.
\end{lem}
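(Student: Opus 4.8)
The plan is to reduce the statement to the elementary fact that two tree automorphisms with disjoint supports commute, where by the \emph{support} of an automorphism $g$ I mean the set of vertices $u$ with $g(u)\neq u$.

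The first and main step is to prove the auxiliary claim that for every finite word $v$ the automorphism $t_v$ fixes every vertex $u$ that does not have $v$ as a prefix; equivalently, the support of $t_v$ is contained in the subtree $vT_3$. I would argue by induction on the length $|v|$. The case $|v|=0$ is vacuous, since every vertex has the empty word as a prefix. For the inductive step write $v=xv'$ with $x\in\{1,2,3\}$ and $|v'|=|v|-1$. By the recursive definition $t_v$ has trivial root permutation, restriction $t_{v'}$ in the $x$th coordinate, and trivial restriction in the other two coordinates; using the action convention $g(yu')=\sigma(y)\,(g|_y)(u')$ coming from Proposition \ref{decomposition of automorphisms}, a vertex $u$ beginning with a letter $y\neq x$ is fixed outright, while a vertex $u=xu'$ that does not have $v=xv'$ as a prefix has the property that $u'$ does not have $v'$ as a prefix, so $t_{v'}(u')=u'$ by the induction hypothesis and hence $t_v(u)=x\,t_{v'}(u')=xu'=u$. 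This establishes the claim.

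Next I would unwind the hypotheses: $w_1\nprec w_2$ and $w_2\nprec w_1$ say exactly that neither of $w_1,w_2$ is a prefix of the other, and therefore the subtrees $w_1T_3$ and $w_2T_3$ are disjoint, since a common vertex would have both $w_1$ and $w_2$ among its prefixes, forcing one of them to be a prefix of the other. By the auxiliary claim $t_{w_1}$ is supported in $w_1T_3$ and $t_{w_2}$ in $w_2T_3$, so their supports are disjoint.

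Finally I would conclude by a direct check. Write $A=w_1T_3$ and $B=w_2T_3$; since $t_{w_1}$ is a bijection fixing the complement of $A$ pointwise it maps $A$ onto itself, and likewise $t_{w_2}$ maps $B$ onto itself. Verifying $t_{w_1}t_{w_2}(u)=t_{w_2}t_{w_1}(u)$ on an arbitrary vertex $u$ then splits into the three disjoint, exhaustive cases $u\notin A\cup B$, $u\in A$, and $u\in B$; in each case one of the two automorphisms fixes both $u$ and the vertex produced by the other, so the two compositions agree. Hence $t_{w_1}t_{w_2}=t_{w_2}t_{w_1}$. I expect the only delicate point to be the bookkeeping in the support induction, namely keeping the prefix relation straight across the coordinate shift; the commutation argument itself is routine once the disjointness of supports is in hand.
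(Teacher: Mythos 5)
Your proof is correct, and it rests on the same underlying fact as the paper's argument---namely that $t_v$ is supported in the subtree $vT_3$, so that $t_{w_1}$ and $t_{w_2}$ move disjoint sets of vertices when neither word is a prefix of the other---but the packaging is genuinely different. The paper writes $w_1=ua_1v_1$ and $w_2=ua_2v_2$ with $a_1\neq a_2$, splitting at the longest common prefix $u$, restricts via the map $F\colon g\mapsto g|_u$, asserts that $F$ is injective on $\langle t_{w_1},t_{w_2}\rangle$, and then reads off commutativity because $t_{a_1v_1}=(1,t_{v_1},1)$ and $t_{a_2v_2}=(1,1,t_{v_2})$ occupy different coordinates of the wreath decomposition. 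You avoid the restriction map entirely: you prove the support containment by induction on $|v|$ and then invoke the elementary fact that bijections with disjoint supports commute. What your version buys is self-containedness: the injectivity of $F|_{\langle t_{w_1},t_{w_2}\rangle}$, which the paper uses without comment, is itself justified precisely by the support claim you prove (both generators act trivially outside $uT_3$, so any element of the subgroup they generate is determined by its restriction at $u$), and the paper elsewhere states the support fact without proof. What the paper's version buys is brevity, since it reuses the wreath-recursion formalism of Proposition \ref{decomposition of automorphisms} in a single step rather than running a pointwise case analysis. Either route establishes the lemma.
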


\begin{proof}
If $w_1 \nprec w_2$ and $w_2 \nprec w_1$, suppose $w_1=ua_1v_1$ and $w_2=ua_2v_2$ for some letters $a_1,a_2\in \{1,2,3\}$ such that $a_1\neq a_2$, and for some finite words $u,v_1,v_2\in \{1,2,3\}^*$. Suppose $a_1=2,a_2=3$. Other cases are similar.
Define a map $F:\mbox{Aut}(T_3)\rightarrow \mbox{Aut}(T_3)$ by $g \mapsto g|_u$. Then $F(t_{w_1})=t_{a_1v_1}$, $F(t_{w_2})=t_{a_2v_2}$ and $F|_{\langle t_{w_1},t_{w_2} \rangle}$ is injective. Because $t_{a_1v_1}=(1,t_{v_1},1)$ and $t_{a_2v_2}=(1,1,t_{v_2})$, so $F(t_{w_1})\cdot F(t_{w_2})=F(t_{w_2})\cdot F(t_{w_1})$, which implies that $t_{w_1}\cdot t_{w_2}=t_{w_2}\cdot t_{w_1}$.
\end{proof}

Next, we deal with the case $w_1\prec w_2$ or $w_2\prec w_1$. For convenience, we always assume that $w_1\prec w_2$, i.e. $w_2=w_1 v$ for some finite word $v$. Define a map $F:\mbox{Aut}(T_3)\rightarrow \mbox{Aut}(T_3)$ by $g \mapsto g|_{w_1}$. Then $F|_{\langle t_{w_1},t_{w_2} \rangle}$ is injective, so we only need to analyse $F(t_{w_2})=t_v$ and $F(t_{w_1})=t$.

\begin{lem}\label{case 2}
Let $v$ be a finite word and assume $v$ contains 1, then $t_v\cdot t=t\cdot t_v$.
\end{lem}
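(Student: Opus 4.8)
My plan is to argue by induction on the length $|v|$, using the explicit formula $t=(1,c(23),(23)c)$ together with the fact that every $t_w$ has trivial root permutation. Because both factors of $t_v\cdot t$ fix the first level, the product is computed coordinate by coordinate, so the whole question reduces to comparing the three restrictions. The one structural feature I will lean on repeatedly is that the first coordinate of $t$ is trivial, $t|_1=1$.

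The base case and the case in which $v$ begins with the letter $1$ are immediate. Writing $v=1v'$ we have $t_v=(t_{v'},1,1)$, and multiplying coordinatewise against $t=(1,c(23),(23)c)$ the only comparison that is not a tautology occurs in the first coordinate, where $t_{v'}\cdot 1=1\cdot t_{v'}$. Hence $t_v$ and $t$ commute. This already covers every word whose first letter is $1$, and in particular the unique length-one word containing $1$.

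For the inductive step suppose $v$ begins with $2$ (the case of $3$ being symmetric), say $v=2v'$; since $v$ contains a $1$ and its first letter is $2$, the tail $v'$ contains a $1$. Here $t_v=(1,t_{v'},1)$, and computing coordinatewise shows that $t_v$ commutes with $t$ if and only if $t_{v'}$ commutes with $t|_2=c(23)$; in the same way the case $v=3v'$ reduces to $t_{v'}$ commuting with $t|_3=(23)c$. Thus everything comes down to the auxiliary assertion that, whenever a word $w$ contains $1$, the element $t_w$ commutes with both $c(23)$ and $(23)c$. I would attack this auxiliary assertion by a parallel induction on $|w|$, again splitting on the first letter of $w$ and expanding by means of $c(23)=(1,(23),c)(23)$ and $(23)c=(1,c,(23))(23)$, invoking $c^2=1$ and $ctc=t^{-1}$ --- both of which follow from the identification $\langle(23),c\rangle\cong G_2\cong\mathbb{Z}_2\ast\mathbb{Z}_2$ with $(23)\leftrightarrow a$, $c\leftrightarrow b$ and $t\leftrightarrow abab$.

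The step I expect to be the main obstacle is exactly this auxiliary reduction, and the difficulty is caused by the nontrivial root permutation $(23)$ that $c(23)$ and $(23)c$ carry but $t$ does not. This permutation interchanges the second and third coordinates, so when one expands $t_w\cdot c(23)$ and $c(23)\cdot t_w$ the coordinates of $t_w$ get shuffled and the second and third branches become coupled: the recursion no longer remains inside a single coordinate, and one must follow carefully how the distinguished letter $1$ of $w$ is transported by these swaps. Keeping control of the interaction between the ``$w$ contains $1$'' hypothesis and this coordinate transposition --- and it is here that the relations $c^2=1$ and $ctc=t^{-1}$ are indispensable --- is the real content of the lemma and the crux of the argument.
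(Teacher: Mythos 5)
Your coordinatewise reduction is correct, and it is carried out more carefully than the paper's own proof, which simply asserts that the claim follows ``by induction on the length of $\tilde u$ and $\tilde v$.'' But the proposal stops exactly where the work begins: the auxiliary assertion --- that $t_w$ commutes with $c(23)$ and with $(23)c$ whenever $w$ contains a $1$ --- is announced as the crux and then left unproved, which is already a genuine gap. Worse, the auxiliary assertion is false, so the route cannot be completed. Take $w=21$. Conjugation by the root permutation $(23)$ swaps the second and third coordinates, so $(23)\,t_{21}\,(23)=(23)(1,t_1,1)(23)=(1,1,t_1)=t_{31}$; and since $c\,t_1\,c=(1,(23),c)(t,1,1)(1,(23),c)=(t,1,1)=t_1$, we get $c\,t_{31}\,c=(1,1,c\,t_1\,c)=t_{31}$. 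Hence
$$c(23)\cdot t_{21}\cdot\big(c(23)\big)^{-1}=c\,\big((23)\,t_{21}\,(23)\big)\,c=c\,t_{31}\,c=t_{31}\neq t_{21},$$
so $t_{21}$ does not commute with $c(23)$ even though $21$ contains a $1$. The nontrivial root permutation of $c(23)$, which you correctly flagged as the difficulty, is a genuine obstruction rather than a technical nuisance.

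Feeding this back through your (correct) reduction in fact disproves the lemma as stated: for $v=221$, which contains a $1$, the second coordinate of $t\,t_{221}\,t^{-1}$ is $c(23)\,t_{21}\,(23)c=t_{31}$, so $t\,t_{221}\,t^{-1}=(1,t_{31},1)=t_{231}\neq t_{221}$. What your analysis actually establishes is that $t_v$ commutes with $t$ precisely when $|v|\leqslant 1$ or one of the first two letters of $v$ is $1$: the first coordinates of $t$, $c(23)$ and $(23)c$ are all trivial and their root permutations fix the letter $1$, which settles those cases, while a letter $1$ sitting deeper than the second position does not help, and the conjugate is then a different generator $t_{v'}$ exactly as in Lemmas \ref{case 5}--\ref{case 8}. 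So the honest conclusion of your approach is not the lemma but a corrected version of it with a hypothesis on the position of the letter $1$; the paper's one-line induction glosses over precisely the point where your argument (rightly) gets stuck. Note that the corrected statement still suffices for the normality argument in the final theorem, since the conjugate $t_{231}$ remains among the generators $t_w$ with $|w|$ unchanged.
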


\begin{proof}
By assumption the word $v$ contains 1, i.e. $v=\tilde{u}1\tilde{v}$ for some finite words $\tilde{u}$ and $\tilde{v}$, it's easy to see that $t_v\cdot t=t\cdot t_v$ by induction on the length of $\tilde{u}$ and $\tilde{v}$.
\end{proof}

\begin{lem}\label{case 3}
Suppose $v=2$ or 3, then $t_v\cdot t=t\cdot t_v$.
\end{lem}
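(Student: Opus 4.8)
The plan is to use the injective restriction map $F\colon g\mapsto g|_{w_1}$ already set up before the lemma, which reduces the claim to showing that $t$ commutes with $t_v$ for $v\in\{2,3\}$, where by definition $t_2=(1,t,1)$ and $t_3=(1,1,t)$. Recalling that $t=(1,c(23),(23)c)$ with trivial root permutation, I would first carry out the coordinatewise wreath-product multiplication. Since $t$, $t_2$ and $t_3$ all fix the first level, the products $t\cdot t_v$ and $t_v\cdot t$ differ only in a single coordinate: comparing them shows that $t\cdot t_2=t_2\cdot t$ is equivalent to $c(23)\cdot t=t\cdot c(23)$, while $t\cdot t_3=t_3\cdot t$ is equivalent to $(23)c\cdot t=t\cdot (23)c$. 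Thus everything comes down to proving that $t$ commutes with both $(23)c$ and $c(23)$.

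For this I would exploit the two algebraic identities $t=\big((23)c\big)^2$, which is immediate from $t=(23)c(23)c$, and $c^2=1$. The first identity already gives that $t$ commutes with $(23)c$, settling the case $v=3$. For the case $v=2$ I would note that once $c^2=1$ (and $(23)^2=1$) are available, $(23)c$ has inverse $c(23)$, i.e. $c(23)=\big((23)c\big)^{-1}$; since $t$ is a power of $(23)c$, it commutes with this inverse as well, which gives $c(23)\cdot t=t\cdot c(23)$.

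The one point needing genuine verification is $c^2=1$. I would obtain it directly from the recursion $c=(1,(23),c)$: squaring yields $c^2=(1,(23)^2,c^2)=(1,1,c^2)$, and the only automorphism $x$ of $T_3$ satisfying $x=(1,1,x)$ is the identity, since iterating the recursion forces $x$ to act trivially at every vertex along the rightmost branch, hence everywhere. Alternatively one may cite the identification of $\langle (23),c\rangle$ with the image of $G_2\cong\mathbb{Z}_2\ast\mathbb{Z}_2$ under the embedding of Lemma \ref{mother subgroup}, in which $c$ corresponds to an order-two generator. I expect this small recursion argument to be the only mild obstacle; the remaining steps are a routine wreath-product computation.
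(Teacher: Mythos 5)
Your proposal is correct and follows essentially the same route as the paper: both reduce, via the coordinatewise wreath-product multiplication, to checking that $t$ commutes with $c(23)$ and $(23)c$, and both ultimately rest on the relations $c^2=1$ and $(23)^2=1$. Your only addition is to make explicit the verification of $c^2=1$ (via the recursion $c^2=(1,1,c^2)$) and to package the computation as ``$t=\big((23)c\big)^2$ commutes with $(23)c$ and its inverse,'' whereas the paper performs the cancellation $c(23)t=(23)c=tc(23)$ directly, using $c^2=1$ tacitly.
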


\begin{proof}
$$t\cdot t_2=\big( 1,c(23),(23)c \big)\big( 1,t,1 \big)=\big( 1,c(23)t,(23)c \big),$$
$$t_2\cdot t=\big( 1,t,1 \big)\big( 1,c(23),(23)c \big)=\big( 1,tc(23),(23)c \big).$$
Because $c(23)t=c(23)(23)c(23)c=(23)c=(23)c(23)cc(23)=tc(23)$, $t\cdot t_2=t_2\cdot t$. The same argument can be used to prove $t \cdot t_3=t_3 \cdot t$.
\end{proof}

\begin{lem}\label{case 4}
Let $v$ be a finite word of length 2, i.e. $v=ab$ for $a,b\in \{1,2,3\}$, then $t\cdot t_{ab}=t_{a\hat{b}}^{-1}\cdot t$.
\end{lem}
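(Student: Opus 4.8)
The plan is to reduce the claimed identity to two elementary relations satisfied by $t$ and then verify them by a direct recursive computation. As in the setup preceding Lemma \ref{case 2}, the prefix-restriction map $F\colon g\mapsto g|_{w_1}$ is injective on $\langle t_{w_1},t_{w_2}\rangle$ and sends $t_{w_1}\mapsto t$ and $t_{w_1 ab}\mapsto t_{ab}$, so it suffices to establish the already-reduced identity $t\cdot t_{ab}=t_{a\hat b}^{-1}\cdot t$. Since $\hat b$ is only defined for $b\in\{2,3\}$, and any case in which $v=ab$ contains the letter $1$ is already covered by Lemma \ref{case 2}, the genuine content is the four cases $a,b\in\{2,3\}$.

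The key observation is that $t=(23)\,c\,(23)\,c=\big((23)c\big)^2$, i.e. $t=xyxy$ with $x=(23)$ and $y=c$. First I would record that both factors are involutions: $(23)^2=1$ is trivial, and $c^2=1$ follows from $c=(1,(23),c)$ via $c^2=(1,(23)^2,c^2)=(1,1,c^2)$, whose only bounded solution is the identity. A product of two involutions is inverted by conjugation by either factor, so a one-line cancellation ($xtx=yxyx=t^{-1}$ and $yty=yxyx=t^{-1}$) yields
$$(23)\,t\,(23)=t^{-1}\qquad\text{and}\qquad c\,t\,c=t^{-1}.$$
These two relations, call them (R1) and (R2), are the whole engine of the proof.

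Next I would run the four cases through the self-similar structure $\psi$. Because $t_{ab}$ is supported in the single first-level coordinate $a$ (its restrictions at the other two coordinates are trivial), and likewise for $t_{a\hat b}$, both $t\cdot t_{ab}$ and $t_{a\hat b}^{-1}\cdot t$ automatically agree outside coordinate $a$; only the $a$-th coordinate requires checking. Writing $t=(1,c(23),(23)c)$ together with $t_2=(1,t,1)$ and $t_3=(1,1,t)$, each case collapses to one scalar identity in $\langle t,c,(23)\rangle$: the equal-letter cases $ab\in\{22,33\}$ reduce to $c\,t=t^{-1}c$, i.e. (R2), while the unequal-letter cases $ab\in\{23,32\}$ reduce to $(23)\,t=t^{-1}(23)$, i.e. (R1). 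For instance, when $ab=23$ one computes $\big(t\cdot t_{23}\big)|_2=c(23)\cdot t_3=(1,(23)t,c)(23)$ and $\big(t_{22}^{-1}\cdot t\big)|_2=t_2^{-1}\cdot c(23)=(1,t^{-1}(23),c)(23)$, and these coincide precisely by (R1).

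The computations are routine wreath-product bookkeeping, so I expect no conceptual obstacle once (R1) and (R2) are in hand. The only point demanding care is tracking the root transposition in $c(23)$ and $(23)c$ when composing with the deeper elements $t_2,t_3$: that permutation reshuffles the first-level coordinates, and one must keep straight which coordinate ends up carrying the factor $t$ so that the reduction to (R1) or (R2) is made against the correct entry. Getting these coordinate permutations right, rather than any deeper difficulty, is where the attention should go.
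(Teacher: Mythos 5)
Your proof is correct and takes essentially the same approach as the paper's: both are direct wreath-recursion computations that reduce the identity to the conjugation relations $(23)\,t\,(23)=t^{-1}$ and $c\,t\,c=t^{-1}$ (the paper verifies the first by the same cancellation and uses the second implicitly via $c^2=1$). The only organizational difference is that the paper conjugates $t_{ab}$ by $t$ and unpacks two levels of the recursion, whereas you compare the two sides coordinate-wise at the first level after extracting (R1) and (R2) from the involution decomposition $t=\big((23)c\big)^2$.
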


\begin{proof}
It's just a straightforward calculation. We only check the case $v=23$. Other cases are similar.
$$t \cdot t_{23} \cdot t^{-1}=\big( 1,c(23),(23)c \big) \cdot (1,t_3,1) \cdot \big( 1,(23)c,c(23) \big)=\big( 1,c(23)t_3(23)c,1 \big),$$
$$c(23)t_3(23)c=ct_2c=\big( 1,(23),c \big)\cdot (1,t,1) \cdot \big( 1,(23),c \big)=\big( 1,(23)t(23),1 \big).$$
Because $(23)t(23)=(23)\cdot (23)c(23)c \cdot (23)=t^{-1}$, $c(23)t_3(23)c=(1,t^{-1},1)=t_2^{-1}$.
So $t \cdot t_{23} \cdot t^{-1}=t_{22}^{-1}$, i.e. $t \cdot t_{23}=t_{22}^{-1} \cdot t$.
\end{proof}

Finally, we deal with the case $|v|\geqslant3$.
\begin{lem} \label{case 5}
Let $a\in {1,2,3}$ and $\bar{v}$ be a finite word, then:
\begin{itemize}
  \item $t\cdot t_{23a\bar{v}}=t_{22 \hat{a}\bar{v}}\cdot t$;
  \item $t\cdot t_{32a\bar{v}}=t_{33 \hat{a}\bar{v}}\cdot t$.
\end{itemize}
\end{lem}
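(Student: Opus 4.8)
The plan is to read each identity as a conjugation relation, namely $t\,t_{23a\bar v}\,t^{-1}=t_{22\hat a\bar v}$ and $t\,t_{32a\bar v}\,t^{-1}=t_{33\hat a\bar v}$, and to evaluate the left-hand sides by peeling off two levels of the wreath decomposition. The essential input is the explicit form $t=(1,\,c(23),\,(23)c)$ together with the recursive definition of the $t_v$. Since $t$ has trivial root permutation, conjugation by $t$ acts coordinatewise on any element of $\mathrm{Stab}(1)$; as $t_{23a\bar v}=(1,\,t_{3a\bar v},\,1)$, only the second coordinate contributes, and the first identity reduces to computing the single factor $c(23)\,t_{3a\bar v}\,(c(23))^{-1}$.

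First I would record the level-one data $c(23)=\bigl(1,(23),c\bigr)(23)$ together with its inverse, and substitute $t_{3a\bar v}=(1,1,t_{a\bar v})$. Carrying out the multiplication of Definition~\ref{wreath product}, the leading transposition $(23)$ moves the only nontrivial entry $t_{a\bar v}$ from the third slot into the second, the two stray copies of $c$ cancel, and what remains is
\[
c(23)\,t_{3a\bar v}\,(c(23))^{-1}=\bigl(1,\;(23)\,t_{a\bar v}\,(23),\;1\bigr),
\]
so that $t\,t_{23a\bar v}\,t^{-1}=\bigl(1,\;(1,\;(23)\,t_{a\bar v}\,(23),\;1),\;1\bigr)$. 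Comparing with $t_{22x}=(1,(1,t_x,1),1)$, the first identity follows once I identify $t_x=(23)\,t_{a\bar v}\,(23)$ with $t_{\hat a\bar v}$.

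The crux is therefore the relabeling claim: conjugation by the top transposition $(23)$ sends $t_{a\bar v}$ to $t_{\hat a\bar v}$. I would prove this by observing that $(23)$ is supported on level one alone, swapping the subtrees rooted at $2$ and $3$ while fixing the subtree rooted at $1$; hence it only relabels the leading letter of the index word, leaving the tail $\bar v$ untouched. Concretely $(23)\,t_{2\bar v}\,(23)=t_{3\bar v}$ and $(23)\,t_{3\bar v}\,(23)=t_{2\bar v}$ (and $(23)\,t_{1\bar v}\,(23)=t_{1\bar v}$, matching the convention $\hat1=1$), each of which is an immediate coordinatewise computation. Granting this, the first identity is complete, and the second one is entirely symmetric: replacing the second subtree by the third amounts to using the coordinate $t|_3=(23)c$ in place of $t|_2=c(23)$ in the first peeling step, after which the same relabeling claim produces $t_{33\hat a\bar v}$.

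The main obstacle I anticipate is purely bookkeeping: tracking how the nontrivial root permutation $(23)$ inside $c(23)$ permutes the three coordinates during the wreath multiplication, so as to confirm that the auxiliary factors $c$ cancel and that exactly one clean conjugation by $(23)$ survives. Once the relabeling claim is isolated and verified, no further difficulty remains, since every remaining manipulation is a routine application of the recursion for the $t_v$ and of Definition~\ref{wreath product}.
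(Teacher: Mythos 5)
Your proposal is correct and follows essentially the same route as the paper: peel off the first level coordinatewise (using that $t=(1,c(23),(23)c)$ fixes level one), reduce to the single coordinate $c(23)\,t_{3a\bar v}\,(23)c$, and then use the relabeling $(23)\,t_{x\bar v}\,(23)=t_{\hat x\bar v}$ together with $c^2=1$ to land on $t_{2\hat a\bar v}$, hence $t_{22\hat a\bar v}$. The only cosmetic difference is that you absorb the transposition into $c(23)$ in one wreath multiplication while the paper conjugates first by $(23)$ and then by $c$; the substance is identical.
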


\begin{proof}
We only prove the first case. The second one is similar to the first.
$$t\cdot t_{23a\bar{v}} \cdot t^{-1}=\big( 1,c(23),(23)c \big)\cdot (1,t_{3a\bar{v}},1)\cdot \big( 1,(23)c,c(23) \big)=(1,c(23)t_{3a\bar{v}}(23)c,1),$$
where
$$c(23)t_{3a\bar{v}}(23)c=ct_{2a\bar{v}}c=\big( 1,(23),c \big)\cdot (1,t_{a\bar{v}},1)\cdot \big( 1,(23),c \big)=(1,t_{\hat{a}\bar{v}},1).$$
So $t\cdot t_{23a\bar{v}} \cdot t^{-1}=t_{22 \hat{a}\bar{v}}$, in other words, $t\cdot t_{23a\bar{v}}=t_{22 \hat{a}\bar{v}}\cdot t$.
\end{proof}

\begin{lem}\label{case 6}
For $t$ and $t_v$ defined as above, we have
\begin{itemize}
  \item $t\cdot t_{223\cdots 3}=t_{233\cdots 3}^{-1}\cdot t$;
  \item $t\cdot t_{333\cdots 3}=t_{323\cdots 3}^{-1}\cdot t$,
\end{itemize}
\end{lem}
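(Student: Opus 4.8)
The plan is to establish both identities by computing the conjugate $t\cdot t_v\cdot t^{-1}$ directly in $G_3\wr 3$, exactly as in the proofs of Lemma \ref{case 4} and Lemma \ref{case 5}. Throughout I abbreviate a string of $k$ consecutive $3$'s by $3^k$, so that the two assertions read $t\cdot t_{223^k}=t_{23^{k+1}}^{-1}\cdot t$ for $k\geqslant 1$ and $t\cdot t_{3^{k}}=t_{323^{k-2}}^{-1}\cdot t$ for $k\geqslant 3$. Everything rests on two facts about $c$, which I would record first. Since $c=(1,(23),c)$ has trivial root permutation, $c^2=(1,(23)^2,c^2)=(1,1,c^2)$; an automorphism that fixes the first level, acts trivially on the first two subtrees and equals itself on the third fixes every vertex, so $c^2=1$ and $c^{-1}=c$. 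This is what legitimizes the expressions $t=(1,c(23),(23)c)$ and $t^{-1}=(1,(23)c,c(23))$ used above. The second, decisive fact is that $c$ inverts $t$: directly, $ctc=c\,(23)c(23)c\,c=c(23)c(23)=t^{-1}$, where the middle $c^2$ cancels.

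I would then upgrade this to the family identity $c\,t_{3^{k}}\,c=t_{3^{k}}^{-1}$ for all $k\geqslant 0$, proved by induction from the recursion $c\,(1,1,Y)\,c=(1,1,cYc)$, which is immediate from $c=(1,(23),c)$ together with $c^2=1$. The base case $k=0$ is $ctc=t^{-1}$; assuming the claim for $k-1$ and writing $t_{3^{k}}=(1,1,t_{3^{k-1}})$, the recursion gives $c\,t_{3^{k}}\,c=(1,1,c\,t_{3^{k-1}}\,c)=(1,1,t_{3^{k-1}}^{-1})=t_{3^{k}}^{-1}$. This inversion is precisely the source of the inverse appearing on the right-hand side of the lemma, in contrast with the inverse-free Lemma \ref{case 5}; isolating it cleanly is the only genuine subtlety, and I expect it to be the main obstacle.

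With these tools the first identity is quick. As $t_{223^{k}}=(1,t_{23^{k}},1)$, conjugation by $t$ is supported on the second coordinate while the third collapses via $c^2=1$, so $t\cdot t_{223^{k}}\cdot t^{-1}=(1,\,c(23)\,t_{23^{k}}\,(23)c,\,1)$. Conjugating by the root transposition $(23)$ interchanges the second and third subtrees, whence $(23)\,t_{23^{k}}\,(23)=t_{3^{k+1}}$, and the family identity converts $c\,t_{3^{k+1}}\,c$ into $t_{3^{k+1}}^{-1}$. Reassembling, $t\cdot t_{223^{k}}\cdot t^{-1}=(1,t_{3^{k+1}}^{-1},1)=t_{23^{k+1}}^{-1}$, which is the first bullet.

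For the second identity I would compute coordinatewise, all three factors fixing the first level. Starting from $t_{3^{k}}=(1,1,t_{3^{k-1}})$, the first coordinate is trivial and the second is $c(23)(23)c=c^2=1$, while the third equals $(23)c\,t_{3^{k-1}}\,c(23)=(23)\,t_{3^{k-1}}^{-1}\,(23)=t_{23^{k-2}}^{-1}$, the last equality again swapping the second and third subtrees. Hence $t\cdot t_{3^{k}}\cdot t^{-1}=(1,1,t_{23^{k-2}}^{-1})=t_{323^{k-2}}^{-1}$, which is the second bullet. Apart from bookkeeping of the number of trailing $3$'s, both computations are mechanical once $c\,t_{3^{k}}\,c=t_{3^{k}}^{-1}$ is in place.
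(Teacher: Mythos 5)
Your proof is correct and follows essentially the same route as the paper's: expand everything in wreath coordinates, use $c^2=1$ to collapse the untouched coordinate, and reduce to the inductively proven identity $c\,t_{3\cdots 3}\,c=t_{3\cdots 3}^{-1}$ with base case $ctc=t^{-1}$. The only differences are cosmetic — you isolate that identity as a standalone fact, justify $c^2=1$ explicitly, and write out the second bullet, which the paper dismisses as ``similar.''
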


\begin{proof}
As before, we just prove the first case. The second is similar.
$$t\cdot t_{223\cdots 3}\cdot t^{-1}=\big( 1,c(23),(23)c \big)\cdot (1,t_{23\cdots 3},1)\cdot \big( 1,(23)c,c(23) \big)=(1,c(23)t_{23\cdots 3}(23)c,1).$$
Since $c(23)t_{23\cdots 3}(23)c=ct_{33\cdots3}c=(1,1,ct_{3\cdots3}c)$, we just need to calculate $ct_{3\cdots3}c$:
$$ct_{\underbrace{\scriptstyle3\cdots3}_{\scriptstyle n}}c=(1,1,ct_{\underbrace{\scriptstyle3\cdots3}_{\scriptstyle n-1}}c).$$
By induction on $n$ in the above equation, and $ctc=t^{-1}$, we have $ct_{3\cdots3}c=t_{3\cdots3}^{-1}$. So $t\cdot t_{223\cdots 3}=t_{233\cdots 3}^{-1}\cdot t$.
\end{proof}

\begin{lem}\label{case 7}
For $t$ and $t_v$ defined as above, we have
\begin{itemize}
  \item $t\cdot t_{223\cdots 32}=t_{233\cdots 32}^{-1}\cdot t$;
  \item $t\cdot t_{333\cdots 32}=t_{323\cdots 32}^{-1}\cdot t$.
\end{itemize}
\end{lem}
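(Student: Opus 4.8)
My plan is to prove both identities by the template already used for Lemmas~\ref{case 5} and~\ref{case 6}. As there, I would first apply the injective map $F:\mathrm{Aut}(T_3)\to\mathrm{Aut}(T_3)$, $g\mapsto g|_{w_1}$, to reduce the statement with an arbitrary prefix $w_1$ to the case $w_1=\emptyset$; thus it suffices to establish $t\cdot t_{223\cdots32}=t_{233\cdots32}^{-1}\cdot t$ and $t\cdot t_{333\cdots32}=t_{323\cdots32}^{-1}\cdot t$ directly. The entire computation rests on three elementary facts: that $c$ is an involution (a one-line induction shows $c^2=(1,1,c^2)$ carries a trivial label at every vertex, whence $c^2=1$), that $(23)t(23)=t^{-1}$ (observed already in the proof of Lemma~\ref{case 4}), and that $t=(1,c(23),(23)c)$ with $t^{-1}=(1,(23)c,c(23))$.

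For the first identity the leading letter is $2$, so $t_{223\cdots32}=(1,t_{23\cdots32},1)$ and conjugation by $t$ stays in the second coordinate: $t\cdot t_{223\cdots32}\cdot t^{-1}=(1,\,c(23)\,t_{23\cdots32}\,(23)c,\,1)$. The inner conjugation by $(23)$ converts the leading $2$ of $t_{23\cdots32}$ into a $3$, reducing the problem to evaluating $c\,t_{33\cdots32}\,c$. I would then run the recursion $c\,t_{3^{k}2}\,c=(1,1,c\,t_{3^{k-1}2}\,c)$, exactly parallel to Lemma~\ref{case 6}, but now bottoming out one letter later at the base case $c\,t_2\,c=(1,(23)t(23),c^2)=(1,t^{-1},1)=t_2^{-1}$. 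Induction then gives $c\,t_{3^{k}2}\,c=t_{3^{k}2}^{-1}$, and re-inserting into the second coordinate yields $t\cdot t_{223\cdots32}\cdot t^{-1}=t_{233\cdots32}^{-1}$, which is the claim.

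For the second identity the leading letter is $3$, so $t_{333\cdots32}=(1,1,t_{33\cdots32})$ and conjugation by $t$ now acts in the third coordinate via $(23)c\,(\cdot)\,c(23)$. I would reuse the computation $c\,t_{33\cdots32}\,c=t_{33\cdots32}^{-1}$ from the first part, so that $t\cdot t_{333\cdots32}\cdot t^{-1}=(1,1,(23)\,t_{33\cdots32}^{-1}\,(23))$; the outer $(23)$-conjugation flips the leading $3$ to a $2$, producing $t_{323\cdots32}^{-1}$, as required.

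The only genuinely new ingredient beyond Lemma~\ref{case 6} is the base case $c\,t_2\,c=t_2^{-1}$: in Lemma~\ref{case 6} the block of $3$'s terminated at the empty word and the recursion bottomed out at $ctc=t^{-1}$, whereas here the block terminates at a trailing $2$ and bottoms out one level deeper. Consequently the step I expect to demand the most care is verifying $c^2=1$, which is precisely what kills the stray third-coordinate factor $c^2$ appearing in $c\,t_2\,c$; once this is in place both identities follow from the same peeling-and-conjugating induction, and I anticipate no obstacle beyond tracking which coordinate survives each conjugation and checking that the number of $3$'s inside the $\cdots$ is preserved under the transformation.
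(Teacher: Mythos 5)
Your proof is correct and follows essentially the same route as the paper's: peel off the leading letter, conjugate inside the surviving coordinate, run the recursion $c\,t_{3^{k}2}\,c=(1,1,c\,t_{3^{k-1}2}\,c)$, and bottom out at $c\,t_2\,c=t_2^{-1}$ (which the paper has already established in the proof of Lemma~\ref{case 4}). The only differences are cosmetic: you make explicit the facts $c^2=1$ and the second identity's reduction to the first, both of which the paper leaves implicit with ``the second is similar.''
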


\begin{proof}
As before, we just prove the first case. The second is similar.
$$t\cdot t_{223\cdots 32}\cdot t^{-1}=\big( 1,c(23),(23)c \big)\cdot (1,t_{23\cdots 32},1)\cdot \big( 1,(23)c,c(23) \big)=(1,c(23)t_{23\cdots 32}(23)c,1).$$
Since $c(23)t_{23\cdots 32}(23)c=ct_{33\cdots32}c=(1,1,ct_{3\cdots32}c)$, we only need to calculate $ct_{3\cdots32}c$:
$$ct_{\underbrace{\scriptstyle3\cdots3}_{\scriptstyle n}2}c=(1,1,ct_{\underbrace{\scriptstyle3\cdots3}_{\scriptstyle n-1}2}c).$$
By induction on $n$ in the above equation, and $ct_2c=t_2^{-1}$, we have $ct_{3\cdots32}c=t_{3\cdots32}^{-1}$. So $t\cdot t_{223\cdots 32}=t_{233\cdots 32}^{-1}\cdot t$.
\end{proof}

Now we come to the last case.
\begin{lem}\label{case 8}
Let $a\in\{1,2,3\}$, $\bar{v}$ be a finite word, and $t$ be as above. Then we have:
\begin{itemize}
  \item $t\cdot t_{223\cdots 32a\bar{v}}=t_{233\cdots 32\hat{a} \bar{v}}\cdot t$;
  \item $t\cdot t_{333\cdots 32a\bar{v}}=t_{323\cdots 32\hat{a} \bar{v}}\cdot t$.
\end{itemize}
\end{lem}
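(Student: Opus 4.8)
The plan is to prove both identities by the same wreath-product computation used in Lemmas \ref{case 5}--\ref{case 7}: conjugate $t_w$ by $t$, read off the three coordinates of $t\,t_w\,t^{-1}$ in $G_3\wr 3$, and simplify using facts already recorded in the proofs of Lemmas \ref{case 4}--\ref{case 7}, namely $c^2=1$ (immediate from the recursion $c=(1,(23),c)$, which forces $c^{-1}=c$), $(23)t(23)=t^{-1}$, and $c\,t_2\,c=t_2^{-1}$. Throughout, recall $t=(1,c(23),(23)c)$ and $t^{-1}=(1,(23)c,c(23))$.

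First I would isolate two elementary rules. \emph{Conjugation by $(23)$}: since $(23)$ permutes only the first-level subtrees and $t_{2u}=(1,t_u,1)$, $t_{3u}=(1,1,t_u)$, $t_{1u}=(t_u,1,1)$, conjugation by $(23)$ sends $t_w$ to $t_{w'}$, where $w'$ flips the first letter of $w$ under $2\leftrightarrow 3$ (and fixes it if it is $1$). \emph{Descent along the $3$-branch}: from $t_{3u}=(1,1,t_u)$, $c=(1,(23),c)$ and $c^2=1$ one gets $c\,t_{3u}\,c=(1,1,c\,t_u\,c)$, so iterating places $c\,t_u\,c$ at depth $k$ down the rightmost branch in $c\,t_{3^k u}\,c$. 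I would then compute the base case $c\,t_{2a\bar v}\,c=t_{2\hat a\bar v}$ for $a\in\{2,3\}$: since $t_{2a\bar v}=(1,t_{a\bar v},1)$, we get $c\,t_{2a\bar v}\,c=(1,(23)t_{a\bar v}(23),1)=(1,t_{\hat a\bar v},1)$ by the conjugation rule. Note there is \emph{no} inversion here, unlike in Lemma \ref{case 7}, where the descent terminated at $c\,t_2\,c=t_2^{-1}$; the difference is that the element at the foot of the branch is now the proper-subtree element $t_{a\bar v}$ rather than $t$ itself, so conjugation by $(23)$ merely relabels instead of inverting.

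For the first identity, write $w=2u$ with $u=23\cdots 32a\bar v$. Then $t\,t_w\,t^{-1}=(1,\,c(23)\,t_u\,(23)c,\,1)$, the outer coordinates collapsing to $1$ via $c^2=1$. The conjugation rule turns $(23)t_u(23)$ into $t_{33\cdots 32a\bar v}$ (one longer run of $3$'s), so the middle coordinate is $c\,t_{33\cdots 32a\bar v}\,c$; descending this run to the base case $c\,t_{2a\bar v}\,c=t_{2\hat a\bar v}$ gives $t_{33\cdots 32\hat a\bar v}$. Reassembling, $t\,t_w\,t^{-1}=t_{233\cdots 32\hat a\bar v}$, which is the claim. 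For the second identity, write $w=3u$ with $u=3\cdots 32a\bar v$; now $t\,t_w\,t^{-1}=(1,1,\,(23)c\,t_u\,c(23))$. The inner factor $c\,t_u\,c$ is handled by the same descent-plus-base-case, yielding $t_{3\cdots 32\hat a\bar v}$, and the outer conjugation by $(23)$ flips the leading $3$ to a $2$, so $t\,t_w\,t^{-1}=t_{323\cdots 32\hat a\bar v}$, as required.

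The computation is entirely routine once Lemmas \ref{case 5}--\ref{case 7} are available; these two identities are exactly the $223\cdots 3$ and $333\cdots 3$ computations of Lemmas \ref{case 6} and \ref{case 7} with the finitary tail $2a\bar v$ carried unchanged through the descent. The only points demanding care are bookkeeping: applying the conjugation rule to the correct leading letter at each stage, tracking the exact number of $3$'s (it grows by one in the first identity as a leading $2$ is absorbed into the run), and using $a\in\{2,3\}$ — guaranteed since we are in the sub-case where $v$ contains no $1$ — so that $\hat a$ is defined. I do not expect a genuine obstacle beyond this bookkeeping.
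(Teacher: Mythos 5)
Your proof is correct and follows essentially the same computation as the paper's: pass to the wreath-product coordinates, flip the leading letter by conjugating with $(23)$, descend the run of $3$'s via $c\,t_{3u}\,c=(1,1,c\,t_u\,c)$, and finish with the base case $c\,t_{2a\bar{v}}\,c=t_{2\hat{a}\bar{v}}$. The only differences are that you carry out the second identity explicitly (the paper declares it ``similar'') and that you rightly observe $a\in\{2,3\}$ is needed for $\hat{a}$ to be defined, which is guaranteed in the context of Proposition \ref{comm relation}.
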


\begin{proof}
As before, we just prove the first case. The second is similar.
$$t\cdot t_{223\cdots 32a\bar{v}}\cdot t^{-1}=\big( 1,c(23),(23)c \big)\cdot (1,t_{23\cdots 32a\bar{v}},1)\cdot\big( 1,(23)c,c(23) \big)=(1,c(23)t_{23\cdots 32a\bar{v}}(23)c,1).$$
Since $c(23)t_{23\cdots 32a\bar{v}}(23)c=ct_{33\cdots 32a\bar{v}}c=(1,1,ct_{3\cdots 32a\bar{v}}c)$, we only need to calculate $ct_{3\cdots 32a\bar{v}}c$:
$$ct_{\underbrace{\scriptstyle3\cdots3}_{\scriptstyle n}2a\bar{v}}c=(1,1,ct_{\underbrace{\scriptstyle3\cdots3}_{\scriptstyle n-1}2a\bar{v}}c).$$
By induction on $n$ in the above equation, it reduces to calculate $ct_{2a\bar{v}}c$.
Since $ct_{2a\bar{v}}c=\big( 1,(23),c \big)\cdot(1,t_{a\bar{v}},1)\cdot\big( 1,(23),c \big)=(1,t_{\hat{a}\bar{v}},1)$,
we have $t\cdot t_{223\cdots 32a\bar{v}}\cdot t^{-1}=t_{233\cdots 32\hat{a} \bar{v}}$. In other words, $t\cdot t_{223\cdots 32a\bar{v}}=t_{233\cdots 32\hat{a} \bar{v}}\cdot t$.
\end{proof}

\noindent {\bf Proof of Proposition \ref{comm relation}.}
It follows from Lemma \ref{case 1} to Lemma \ref{case 8}.
\begin{flushright}
$\Box$
\end{flushright}

Now we prove our second main theorem.
\begin{thm}
The subgroup $T=\langle t_v ~|~ v \in \{1,2,3\}^* \rangle$ in $G_3$ has finite decomposition complexity with respect to any proper length metric. More precisely, $T\in \mathcal{D}_\omega$
\end{thm}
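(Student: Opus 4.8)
The plan is to exhibit an explicit filtration of $T$ by normal subgroups whose successive quotients are easy to control, and to feed this into the extension permanence property for decomposition complexity (Proposition \ref{extension}). The natural filtration comes from the levels of the tree: for each $n\geqslant 0$ let $T_n=\langle t_v ~|~ v\in\{1,2,3\}^*,\ |v|\geqslant n\rangle$, so that $T=T_0\supseteq T_1\supseteq T_2\supseteq\cdots$. The key structural input is Proposition \ref{comm relation}: conjugating a generator $t_{w_2}$ by a shorter generator $t_{w_1}$ always returns a generator (or its inverse) indexed by a word of the \emph{same} length as $w_2$. Consequently each $T_n$ is normal in $T$, and one should check that the quotient $T_{n}/T_{n+1}$ is generated by the images of the $t_v$ with $|v|=n$. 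Since distinct generators at the same level are incomparable under $\prec$, Lemma \ref{case 1} shows they commute, and each generates an infinite cyclic group; thus every quotient $T_n/T_{n+1}$ is abelian, in fact a subgroup of $\bigoplus_{\infty}\mathbb{Z}$.

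First I would establish normality of $T_n$ in $T$. It suffices to show that conjugating any generator $t_v$ with $|v|\geqslant n$ by any generator $t_w$ (and its inverse) lands in $T_n$. Proposition \ref{comm relation} handles exactly this: if $w\nprec v$ and $v\nprec w$ the two commute, if $w\prec v$ the relation rewrites $t_w t_v$ as $t_{v'}^{\pm 1}t_w$ with $|v'|=|v|\geqslant n$, and if $v\prec w$ one uses the inverse relation. In every case the conjugate is a generator of level $\geqslant n$, possibly inverted, so it lies in $T_n$. Next I would identify $\bigcap_n T_n=\{1\}$, which follows because $t_v$ acts nontrivially only inside the subtree $vT_3$, so any fixed nontrivial element can involve only finitely many generators and hence cannot lie in all $T_n$. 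This gives a genuine descending filtration with trivial intersection.

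The decomposition-complexity bound then comes from iterating Proposition \ref{extension}. Each quotient $T_n/T_{n+1}$, being a subgroup of $\bigoplus_{\infty}\mathbb{Z}$, lies in $\mathcal{D}_\omega$ by the Example in Section 2 together with Proposition \ref{subspace for asdim}. The subtlety is that a countable tower of extensions of $\mathcal{D}_\omega$-groups does not automatically stay in $\mathcal{D}_\omega$, so I would instead argue directly at the level of the metric family: given $r>0$, only generators $t_v$ with $|v|$ below some threshold $N=N(r)$ can move a point a distance $\leqslant r$, because generators supported deep in the tree have large word length. Thus, up to scale $r$, the group $T$ looks like the finitely-iterated extension $T/T_{N}$, which is built from finitely many abelian layers and hence lies in some $\mathcal{D}_n$ with $n<\omega$. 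Letting $r\to\infty$ and assembling these decompositions via the definition of $\mathcal{D}_\omega$ yields $T\in\mathcal{D}_\omega$.

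The main obstacle I anticipate is precisely this last step: making rigorous the claim that ``at scale $r$ only bounded-depth generators matter.'' One must verify a length estimate showing $l(t_v)\to\infty$ as $|v|\to\infty$ in the chosen proper metric, and then show that an $r$-bounded decomposition of the finite-depth truncation lifts to an $r$-decomposition of $T$ over a family in $\mathcal{D}_\beta$ for some $\beta<\omega$. Controlling how the non-commutativity encoded in Proposition \ref{comm relation} interacts with the $r$-disjointness requirement is the delicate point; the saving grace is that every relation preserves word length of the index $v$, so conjugation cannot decrease depth and the truncation $T/T_N$ is genuinely a quotient by a subgroup generated by deep generators. Once the truncation is seen to be a finite tower of $\mathbb{Z}$-by-abelian extensions, finiteness of its asymptotic dimension (hence membership in $\mathcal{D}_n$) follows from Proposition \ref{extension}, and the passage to $\mathcal{D}_\omega$ is the expected diagonal argument.
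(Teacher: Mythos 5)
Your structural observations are sound --- the level-preserving conjugation relations from Proposition \ref{comm relation}, the normality of the ``deep'' subgroups $\langle t_v : |v|\geqslant n\rangle$, and the fact that the successive quotients are finitely generated abelian --- but the final step, which you yourself flag as the delicate one, contains a genuine gap that your route does not close. Membership in $\mathcal{D}_\omega$ requires, for each $r$, a \emph{single} $r$-decomposition of $T$ over a family lying in $\mathcal{D}_\beta$ for some \emph{finite} $\beta$. If you decompose the quotient $T/T_N$ (with $T_N=\langle t_v : |v|\geqslant N\rangle$) into $r$-disjoint bounded pieces and pull back, each resulting piece of $T$ is a union of boundedly many cosets of $T_N$. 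But $T_N$ contains the pairwise commuting infinite-order elements $t_v$ supported on disjoint subtrees, hence a copy of $\bigoplus_{\infty}\mathbb{Z}$, hence has infinite asymptotic dimension; so these pulled-back pieces lie in no $\mathcal{D}_\beta$ with $\beta<\omega$, and the decomposition does not certify $T\in\mathcal{D}_\omega$. Iterating the procedure only pushes you past $\omega$, and in any case controlling $T_N$ is the same problem over again. A secondary gap: the claim that $T\to T/T_N$ is a local isometry at scale $r$ needs every nontrivial element of $T_N$ to have large length, i.e.\ $T_N\cap\langle t_v : |v|<N\rangle=\{1\}$, which you do not address.

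The fix is to turn the filtration the other way round, which is what the paper does. Set $T_{\leqslant n}=\langle t_v : |v|\leqslant n\rangle$ and use the proper length function with $l(t_v^{\pm1})=|v|$. Any element outside $T_{\leqslant n}$ must, in every representation, involve a generator of depth at least $n+1$, so distinct right cosets of $T_{\leqslant n}$ are $(n+1)$-separated; hence for $r<n+1$ the partition $T=\bigsqcup_{g}T_{\leqslant n}g$ is an $r$-decomposition over isometric copies of the \emph{subgroup} $T_{\leqslant n}$ --- a genuine subset of $T$, rather than a quotient whose fibres are unbounded and infinite-dimensional. It then remains to show $T_{\leqslant n}\in\mathcal{D}_{\beta_n}$ for finite $\beta_n$, which the paper gets by induction: the three embeddings $j_i(t_v)=t_{iv}$ have commuting images whose direct sum is normal in $T_{\leqslant n}$ with cyclic quotient generated by the image of $t$, and Proposition \ref{extension} yields $T_{\leqslant n}\in\mathcal{D}_{4^{n+1}}$. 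Your observation that the level-$k$ generators are finitely generated abelian modulo deeper ones is in the right spirit and could be adapted to bound the complexity of $T_{\leqslant n}$, but the decomposition of $T$ itself must be by cosets of the shallow subgroup, not by fibres of the quotient by the deep one.
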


\begin{proof}
By section 2.5, we can take any proper length function on $T$. Define a proper length function $l$ on the generating set $J=\{t_v ~|~v \in \{1,2,3\}^*\}$ of $T$ by $l(t_v^{\pm 1})=|v|$.
$l$ can be extended to a length function on $T$ by the following formula:
$$l(g)=\mbox{min} \big\{~ \sum_{i=1}^n l(t_{v_i}) ~\big|~ g=t_{v_1}^{\pm1} t_{v_2}^{\pm1}\cdots t_{v_n}^{\pm1},n\in \mathbb{N},t_{v_i}\in J   ~\big\},$$
where $g\in T$. It's easy to check that $l$ is proper on $T$.

For any $n\in \mathbb{N}\cup \{0\}$, define $T_n=\langle t_v ~|~ v \in \{1,2,3\}^* \mbox{~and~} |v|\leqslant n \rangle$.
For any mutually different right cosets $T_ng$ and $T_nh$ in $T$, we have $gh^{-1}\notin T_n$, so $l(gh^{-1})\geqslant n+1$. In fact, we can prove $l(k)\geqslant n+1$ for any $k \notin T_n$. To see this, take any minimal representation of $k$: $k=t_{v_1}^{\pm1} t_{v_2}^{\pm1}\cdots t_{v_n}^{\pm1}$ for some $t_{v_i}\in J$. Because $k \notin T_n$, there exists some generator $t_{v_i}$ with $|v_i|\geqslant n+1$. By the definition of $l$, we see $l(k) \geqslant n+1$.
Now
$$d(T_ng,T_nh)=d(T_ngh^{-1},T_n)=\mbox{min}_{u,v\in T_n}l(ugh^{-1}v^{-1})\geqslant n+1.$$
For the last inequality, notice $ugh^{-1}v^{-1} \notin T_n$.
So we only need to check $T_n$ has FDC for all $n$. More precisely, we claim that $T_n\in \mathcal{D}_{4^{n+1}}$. If it's true, for any $R>0$, take some $n>R$, then $T$ has a decomposition into all of the right cosets of $T_n$:
$$T=\bigsqcup_{g} T_ng,$$
and the distance between different cosets are greater than $R$. By the definition of FDC, we know $T\in \mathcal{D}_\omega$.
We prove the claim by induction on $n$.

$T_0=\langle t \rangle\cong\mathbb{Z}\in\mathcal{D}_1$, also contained in $\mathcal{D}_4$. Suppose $T_{n-1}$ belongs to $\mathcal{D}_{4^n}$.
There are three natural ways to embed $T_{n-1}$ into $T_n$. Define $j_1,j_2,j_3:T_{n-1} \rightarrow T_n$ induced by $j_1(t_v)=t_{1v}$, $j_2(t_v)=t_{2v}$ and $j_3(t_v)=t_{3v}$. It's easy to check these three maps are well defined and injective.
And it's also a straightforward calculation that their images $j_1(T_{n-1})$, $j_2(T_{n-1})$ and $j_3(T_{n-1})$ are commutative with each other.

We claim that $j_1(T_{n-1})\oplus j_2(T_{n-1})\oplus j_3(T_{n-1})$ is normal in $T_n$. In fact, notice that
\begin{equation}\label{T_n}
T_n=\langle j_1(T_{n-1})\oplus j_2(T_{n-1})\oplus j_3(T_{n-1}),t  \rangle.
\end{equation}
So we only need to check that $t^{-1}\cdot t_v \cdot t \in j_1(T_{n-1})\oplus j_2(T_{n-1})\oplus j_3(T_{n-1})$ for any finite word $v$ with $1\leqslant |v| \leqslant n$. From Proposition \ref{comm relation}, it's obvious.

Finally, from equation (\ref{T_n}), we know $T_n/\big(j_1(T_{n-1})\oplus j_2(T_{n-1})\oplus j_3(T_{n-1})\big)\cong\langle  \bar{t} \rangle$, which is isomorphic to $\mathbb{Z}$ or $\mathbb{Z}/n\mathbb{Z}$. From Proposition \ref{extension} which tells us that FDC is preserved by extension, we know that $T_n$ has FDC by the assumption on $T_{n-1}$. Since $T_{n-1}\in \mathcal{D}_{4^n}$, we have $j_1(T_{n-1})\oplus j_2(T_{n-1})\oplus j_3(T_{n-1}) \in \mathcal{D}_{3\cdot4^n}$, so $T_n\in \mathcal{D}_{3\cdot4^n+1} \subseteq \mathcal{D}_{4^{n+1}}$.
\end{proof}

Finally, we present an unsolved problem concerning FDC of all mother groups.

\noindent {\bf Problem:}
Do mother groups of bounded automata groups have FDC? In particular, does the Grigorchuk group $\mathbb{G}$ have FDC?

\section*{References}

\bibliography{mybibfile}

\end{document}